\documentclass[11pt]{article}

\usepackage{latexsym,mathrsfs}
\usepackage{amsmath,amssymb} 
\usepackage{amsthm,enumerate,verbatim}
\usepackage{amsfonts}
\usepackage{graphicx}
\usepackage{algorithm}
\usepackage{booktabs}
\usepackage{enumitem}
\usepackage[noend]{algpseudocode}
\usepackage{fullpage}

\usepackage{hyperref} 

\usepackage{url}
\usepackage{color}
\usepackage{xcolor}

\newtheorem{lemma}{Lemma}

\newtheorem{theorem}{Theorem}

\newtheorem{definition}{Definition}
\newtheorem*{definition*}{Definition}
\newtheorem{example}{Example}

\newtheorem{remark}{Remark}

\DeclareMathOperator{\argmax}{argmax}

\DeclareMathOperator{\rank}{rank}

\DeclareMathOperator{\vect}{vec} 

\newcommand{{{{\trasp}}}}{^*}
\newcommand{\R}{\mathbb{R}}

\newcommand{\cA}{\mathcal{A}}

\definecolor{brightpink}{rgb}{1.0, 0.0, 0.5}
\newcommand{\ngc}[1]{{\color{brightpink} (\textbf{NG:} #1)}}

\definecolor{blue}{RGB}{0,0,255}

\title{Computing the Nonnegative Low-Rank Leading Eigenmatrix and \\ its Applications to Markov Grids and Metzler Operators} 
 
\date{}

\author{
 Nicolas Gillis\thanks{
 University of Mons, Rue de Houdain 9, 7000 Mons. Email: nicolas.gillis@umons.ac.be. NG acknowledges the support  by the European Union (ERC consolidator, eLinoR, no 101085607).} 
\qquad
 Carmela Scalone\thanks{University of L'Aquila, Italy. 
E-mail: carmela.scalone@univaq.it.} 
	}

\begin{document}

\maketitle

\begin{abstract}
We consider in this paper the problem of computing a nonnegative low-rank approximation of the rightmost eigenpair of a linear matrix-valued real operator. We propose an algorithm based on the time integration of a suitable differential system, whose solution is parametrized according to a nonnegative factorization. 
The conservation of the nonnegativity is theoretically motivated by the Perron-Frobenius theorem, while the computation of the rightmost eigenpair is motivated by two applications: 
(1)~a new class of Markov chains, which we called Markov grids, whose transition matrices can be decomposed as the sum of Kronecker products, and 
(2)~spatially structured systems in growth–diffusion operators arising for example in population and epidemic dynamics. 
Theoretical analysis and computational experiments show the effectiveness of the algorithm compared to standard approaches. 
\end{abstract}

%\textbf{Keywords.} 

\section{Introduction} 

%In this paper, we are interested in the following problem: 
Given a linear matrix-valued real operator, $\mathcal A : \R^{n \times n} \mapsto \R^{n \times n}$, the problem of finding its rightmost eigenvalue requires to find $\lambda$ with the largest real part such that $\mathcal A (X) = \lambda X$, for some non-zero matrix $X \in \R^{n \times n}$ called the leading eigenmatrix.  
The leading eigenmatrix  can often be assumed to be of low rank, or to be well approximated by a low-rank matrix; see the survey~\cite{simoncini2016computational} and the references therein; see also Theorem~\ref{th:rankonesol} for a particular case. 
This fact can be leveraged to design significantly more efficient algorithms, especially when we are facing large-scale problems, by reducing the computational cost and the memory requirements; see~\cite{kressner2014low, kressner2016low, simoncini2016computational, rakhuba2018jacobi, guglielmi2021computing}.  
In particular, in \cite{guglielmi2021computing}, the authors introduce a suitable ordinary differential equation (ODE), whose solution allows one to approximate the leading eigenmatrix of the linear operator.    
For a general linear operator, under generic assumptions, the solution of the considered matrix differential equation converges globally to its leading eigenmatrix. 
The search of low-rank solutions relies on the dynamical low-rank approach \cite{koch2007dynamical}, where the matrix ODE  is projected onto the tangent space of the manifold of  matrices of a prescribed rank. 
The numerical solution is obtained by using the dynamical low-rank based projector splitting integrator~\cite{lubich2014projector}. The link between eigenvalue problems and dynamical systems is very deep in the literature, see, e.g., \cite{absil2006continuous, chu1988continuous, embree2009dynamical, nanda1985differential}. 

In this paper, we are interested in nonnegative linear operators, that is, $\mathcal A (X) \geq 0$ for any $X \geq 0$, where $X \geq 0$ means that $X$ is component-wise nonnegative. One application that motivates our work is a special type of Markov chains that we introduce in this paper and that we will refer to as Markov grids; see Section~\ref{sec:grids}. The leading eigenmatrix is always nonnegative in this case and the leading eigenvector is real, simple, and positive, by the Perron–Frobenius theorem. 
More generally, one may consider Metzler operators, which can naturally arise from semi-discretizations of positive partial differential equation (PDE)  dynamics. In this setting, the positivity of the dominant eigenvector is guaranteed by the Perron--Frobenius theorem, providing a rigorous justification for applying our low-rank, positivity-preserving, algorithm beyond nonnegative matrices; see Section~\ref{sec:metzler}. 

The best low-rank approximation of a nonnegative matrix is not necessarily nonnegative; see, e.g., Example~\ref{ex:neglowrank33}. 
This very fact has inspired 
    a recent line of works by Song, Ng et al.~\cite{song2020nonnegative, song2022tangent} who designed algorithms to find the best nonnegative low-rank approximation of a nonnegative matrix; more precisely, they aim at solving the following problem: Given a matrix $M \geq 0$ and a prescribed rank $r$, solve 
    \[
\min_{X} \|M - X\|_F^2   
\quad 
  \text{ such that } 
  \quad
  X \geq 0 \text{ and } \rank(X) \leq r. 
    \] 
%for a matrix $M \geq 0$ and a  rank $r$, 
In some applications, such as Markov chains for which the entries of $X$ can be interpreted as probabilities, it is crucial to keep nonnegativity of the eigenmatrix. 
In this paper, we therefore design an algorithm to find a low-rank approximation to the rightmost eigenvalue problem of a nonnegative linear operator  with nonnegativity constraints. 
Given a linear operator $\mathcal A$ and  rank $r$, this problem can be formulated as the computation of the 
 rightmost eigenpair \((\lambda, X)\) such that 
 %with \(\lambda \in \mathbb{R}\), \(X \in \mathbb{R}^{n \times n}\) solving 
\begin{equation}
    \label{eq:mainprob}
   % \text{computing the rightmost} \((\lambda, X)\) 
%\max_{\lambda, X} \lambda 
\mathcal{A}(X) = \lambda X, \;
\operatorname{rank}(X) \leq r, \;
X \geq 0,\;
X \neq 0. 
% \begin{cases}
% \mathca{A}(X) = \lambda X \\
% \lambda \text{ is maximum} \\
% \operatorname{rank}(X) \leq r \\
% X \geq 0 \\
% X \neq 0
% \end{cases}
\end{equation} 
% \begin{equation} \label{eq:mainprob}
%   \text{Compute}\quad (\lambda,X),
%   \quad 
%   \text{ such that } 
%   \quad
% \mathcal A (X) \approx \lambda X, \rank(X) \leq r \text{ and } X \geq 0.   
% \end{equation} 
The starting point of our algorithm is the differential system considered in \cite{guglielmi2021computing}. However, the adaptation that combines low-rank and nonnegativity constraints  is nontrivial. The reason is that a dynamical low-rank approximation is used to include the low-rank constraint on the solution, see \cite{koch2007dynamical}. Such approach is based on a decomposition of the solution, which involves orthogonality conditions that do not combine well with nonnegativity; see \cite{koch2007dynamical, guglielmi2021computing, guglielmi2020efficient}. In fact, a matrix is nonnegative and its columns are orthogonal if and only if it has a single non-zero entry in each row. 

To handle this situation, we parametrize the solution using  nonnegative matrix factorization (NMF)~\cite{gillis2020nonnegative}. This is in contrast to the rationale used in~\cite{song2020nonnegative, song2022tangent} where authors rely on alternating projections on the space of nonnegative matrices and on the manifold of low-rank matrices. 
The latter is accomplished with the truncated singular value decomposition (SVD), which would increase the cost of the time integration of the ODE, if we wanted to use a similar technique.

\paragraph{Outline and contribution of the paper} 

Section~\ref{sec:motiv} motivates the study of the form~\eqref{eq:mainprob} with two case studies. First, we introduce a new class of Markov chains, which we refer to as Markov grids; see Section~\ref{sec:grids}.   
In a nutshell, Markov grids are Markov chains where the transition matrix, $P \geq 0$, can be decomposed as $P = \sum_{p=1}^t \alpha_p B_p \otimes A_p$, where $\otimes$ is the Kronecker product, $\alpha_p$ belongs to the probability simplex, and $(A_p, B_p)$ are transition matrices (that is, row stochastic matrices).
Then, we explain that, more generally, the problem~\eqref{eq:mainprob} can be used for a wider class of linear operators that satisfy a Perron-Frobenius property, including Metzler operators arising, for instance, from spatially discretized reaction-diffusion PDEs; see Section~\ref{sec:metzler}.  
This broadens the potential applications beyond Markov chains, covering problems in population dynamics, epidemiology, and other spatially structured systems where positivity of the principal eigenmatrix is physically meaningful. 
Section~\ref{sec:algo} reviews an ODE-based algorithm for solving~\eqref{eq:mainprob} for any linear operator $\cA$, which we then extend to the nonnegative case. 
%The method proposed in this paper is suitable for any linear operator $\cA$, but one of the structures that is most interesting to us is the one present in , which is very close to the special Markov chains we consider.
%Section~\ref{sec:grids} presents a special type of Markov chains, which we refer to as Markov grid, that provide a motivation for solving~\eqref{eq:mainprob}. 
Section~\ref{sec:numexp} provides numerical experiments illustrating the effectiveness of our algorithm.

\section{Motivations: Markov grids and Meltzer operators} \label{sec:motiv} 

Let us motivate the study of~\eqref{eq:mainprob} with two practical examples: Markov grids and Meltzer operators. 

\subsection{Markov grids}
\label{sec:grids} 

In this section, we introduce a new notion, that of Markov grid, which motivates the study of finding a nonnegative low-rank leading eigenmatrix, that is, of solving~\eqref{eq:mainprob}. 
Let us start with a simple example, before providing a general definition.  
\begin{example}[Illustrative example]  \label{ex:33grid}
    Let us consider a simple 3-by-3 grid. 
    Each node represent a state in a discrete Markov chain. 
    %\footnote{\url{https://www.chegg.com/homework-help/questions-and-answers/application-markov-chains-model-movements-around-grid-con-sider-following-transition-diagr-q80269884}}  
    For each node, there is a 50\% probability to move to either a node in the same row or in the same column. It can only move to neighboring nodes, and, in the same row/column, they have the same probability to be picked; see Figure~\ref{fig:grid33}. 
    \begin{figure}[ht!]
	\begin{center}
		\includegraphics[width=0.25\textwidth]{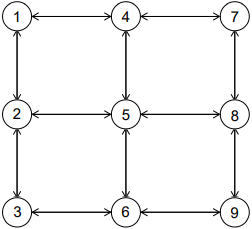}   
		\caption{Markov chain on a 3-by-3 grid.\label{fig:grid33}
   }  
	\end{center}
\end{figure}
 For example, from node~2, we have 50\% chance to move to node~5, and 25\% to move 
 to node~1 or~3. 
 
 Let $x_k \in \mathbb{R}^9_+$ be the state vector such that $x_k(i)$ is the probability to be in state $i$ after $k$ steps. We have $e^\top x_k = 1$ for all $k \in \mathbb{Z}_+$, where $e$ is the vector of all ones, and $x_0$ is the initial state vector.  
The transition matrix $P \in \mathbb{R}^{9 \times 9}$ is defined such that $P(i,j)$ is the probability to move from node~$i$ to node~$j$. 
%Note that we use an unconventional notation, as usually $P(i,j)$ is the probability to move from node $i$ to node $j$. 
The transition matrix $P \in \mathbb{R}^{9 \times 9}$ is row stochastic, that is, $P \geq 0$ and $P e = e$. 
We have $x_{k+1} = P^\top x_k$ for all $k \in \mathbb{Z}_+$, with 
\[ 
P 
%=  \frac{1}{2} I_3^\top \otimes A + \frac{1}{2} B \otimes I_3  
= 
\frac{1}{4} \left( \begin{array}{ccccccccc} 
  0 &   2 &   0 &   2 &   0 &   0 &   0 &   0 &   0 \\ 
  1 &   0 &   1 &   0 &   2 &   0 &   0 &   0 &   0 \\ 
  0 &   2 &   0 &   0 &   0 &   2 &   0 &   0 &   0 \\ 
  1 &   0 &   0 &   0 &   2 &   0 &   1 &   0 &   0 \\ 
  0 &   1 &   0 &   1 &   0 &   1 &   0 &   1 &   0 \\ 
  0 &   0 &   1 &   0 &   2 &   0 &   0 &   0 &   1 \\ 
  0 &   0 &   0 &   2 &   0 &   0 &   0 &   2 &   0 \\ 
  0 &   0 &   0 &   0 &   2 &   0 &   1 &   0 &   1 \\ 
  0 &   0 &   0 &   0 &   0 &   2 &   0 &   2 &   0 \\ 
\end{array} \right). 
\]

Because of the particular form of this chain, we could instead store the state vectors in 3-by-3 matrices, with $X_k \in \mathbb{R}^{3 \times 3}$, such that $x_k = \vect(X_k)$ for all $k \in \mathbb{Z}_+$, where $\vect(\cdot)$ vectorizes a matrix column-wise, that is, for all $\ell, i, j$,  
 \[
 x_{k}(\ell) = X_{k}(i,j) \quad \text{ for } \quad \ell = i + 3(j-1).  
 \] 
The iterative process in matrix form is given by 
\[ 
X_{k+1} = \mathcal{A} (X_{k}) = \frac{1}{2} A^\top X_{k} + \frac{1}{2} X_{k} A, 
\]
where 
\[
A = \begin{pmatrix}
0 & 1 & 0 \\
0.5 & 0 & 0.5 \\
0 & 1 & 0      
\end{pmatrix}.  
\] 
The matrix $A$ describes the transition between rows/columns of the Markov grid; see  
Figure~\ref{fig:grid13}. 
    \begin{figure}[ht!]
	\begin{center}
		\includegraphics[width=0.25\textwidth]{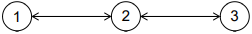}   
		\caption{Markov chain with 3 states. \label{fig:grid13}}  
	\end{center}
\end{figure} 
The matrix representation is more compact as it only requires to store the $n$-by-$n$ matrix $A$, as opposed to the $n^2$-by-$n^2$ matrix $P$ (not taking into account sparsity), while computing $\mathcal{A} (X_{k})$ requires $O(n^3)$ operations as opposed to $O(n^4)$ to compute $P^\top x_k$.  If the $X_k$'s have rank $r$, the cost reduces to $O(n^2 r)$ operations; see Section~\ref{sec:algo} for more details. 
\end{example}

Let us formally define the notion of Markov grids. 

\begin{definition}[Markov grid]
For $m, n \in \mathbb{N}$, a Markov grid with $mn$ states is a discrete Markov chain with the following properties: 
\begin{itemize}

\item The state vector at step $k$, $x_k \in \mathbb{R}^{mn}$, that is, the vector containing the probabilities to be in the different states at step $k$, can be represented by a matrix $X_k \in \mathbb{R}^{m \times n}$, with $x_k = \vect(X_k)$, $X_k \geq 0$ and $e^\top X_k e = 1$, for all $k \in \mathbb{Z}_+$.  

    \item The transition from $X_{k+1}$ to $X_k$ is defined as follows 
    \[
X_{k+1} = \mathcal A(X_k) = \sum_{p=1}^t \alpha_p A_p^\top X_{k} B_p, 
\] 
where 
\begin{itemize}
    %\item $X_{k} \in \mathbb{R}^{m \times n}$ contains the $mn$ states $X_k(i,j)$ in a matrix form,  and $\sum_{i,j} X_k(i,j) = 1$ for all $k$ or, equivalently, $e^\top X_k e = 1$ for all $k$ where $e$ is the vector of all ones. 

    \item $A_p \in \mathbb{R}^{m \times m}_+$ and $B_p \in \mathbb{R}^{n \times n}_+$ are transition matrices (row stochastic matrices).  

    \item $\alpha \in \mathbb{R}^t_+$ are the weights to balance the $t$ terms in the chain, with $e^\top \alpha = 1$. 
\end{itemize}
\end{itemize}
\end{definition}

The constraints on $A_p$, $B_p$ and $\alpha_p$ ensure that if $X_0 \geq 0$ satisfies $e^\top X_0 e = 1$, then $X_k \geq 0$ and $e^\top X_k e = 1$ for all $k \in  \mathbb{Z}_+$, since 
\[
e^\top \left( \sum_{p=1}^t \alpha_p A_p^\top X_{k+1} B_p \right) e 
= 
 \sum_{p=1}^t \alpha_p e^\top A_p^\top X_{k+1} B_p e 
 = 
 \sum_{p=1}^t \alpha_p e^\top X_{k+1} e = 1. 
\] 

Note that if $m$ or $n$ is equal to one, a Markov grid reduces to a standard Markov chain, and $t$ can be chosen equal to one w.l.o.g.

\paragraph{Interpretation} 

Let us try to interpret the transition $X_{k+1} = A_p^\top X_{k} B_p$. 
The product $A_p^\top X_{k}$ makes transitions along the columns of $X_{k}$, since $(A_p^\top X_{k})_{:,j} = A^\top X_{k}(:,j)$ for all $j$. 
The entries in each column of $X_k$ have the same probabilities of transitions towards states in the same column. 
It is like there are $n$ copies of the same chain with $m$ states, each corresponding to a column of $X_k$, and each performing transitions with the same probabilities in parallel.  
Similarly, the product $X_{k} B_p$ performs the transitions with the same probabilities along the rows of $X_k$, since $(X_{k} B_p)_{i,:} = X_{k}(i,:) B_p$ for all $i$. It is like there are $m$ 
copies of the same chain with $n$ states, one for each row of $X_k$, each performing the same transitions in parallel.  
The most natural example is, of course, a grid-like chain, as shown in Example~\ref{sec:grids}. Note that there could be transitions from non-adjacent states on the same row and column.

\paragraph{Link with standard Markov chains} 
We can vectorize the $X_{k}$'s to fall back on a standard Markov chain, using the identity $\vect(AXB) = \left(B^\top \otimes A\right) \vect(X)$, where $\otimes$ is the Kronecker product between two matrices, so that 
\begin{equation} \label{eq:mattovec}
    \vect(X_{k+1}) 
= 
\underbrace{\left( \sum_{p=1}^t \alpha_p B_p^\top \otimes A_p^\top \right)}_{P^\top} \vect(X_{k}), 
\end{equation}
where $P \in \mathbb{R}^{mn \times mn}$ 
is the transition matrix of a standard Markov chain. 
In Example~\ref{sec:grids}, one can check that 
$P = \frac{1}{2} I_3 \otimes A^\top + \frac{1}{2} A^\top \otimes I_3$, since, in this example, $A_1 = A$, $B_1 = I_3$, $A_2 = I_3$, $B_2 = A$, and $\alpha_1 = \alpha_2 = 1/2$.   

Hence a Markov grid is a Markov chain where the transition matrix can be written in the form~\eqref{eq:mattovec}. 
The same structure, but without row stochasticity, was used in~\cite{rakhuba2018jacobi} to solve eigenvalue problems where the eigenvector is reshaped as a low-rank matrix. 

\begin{remark}[Kronecker factorization] 
    The decomposition~\eqref{eq:mattovec} suggests the following problem: given a matrix $P$, 
    find an approximation written as the sum of $t$ Kronecker products:  
    \[
P \; \approx \; \sum_{p=1}^t \alpha_p B_p \otimes A_p. 
    \]
    % Decomposition of a matrix  
    In the unconstrained case and $t=1$, this problem was studied by Van Loan and Pitsianis~\cite{VanLoan1993}, and can be solved in closed form via an SVD computation. The case $t > 1$ has been studied extensively and can be solved, for example, using an iterative procedure optimizing each term $\alpha_p B_p \otimes A_p$ sequentially; see, e.g., \cite{Hackbusch2005, cai2022kopa, hameed2022convolutional, ChenTanh2024} and the references therein. 
    
    However, as far as we know, the case with nonnegativity constraints has not been addressed in the literature, this is a topic of further research.     
    %is also related to tensor decompositions where unfoldings have the form of Kronecker products
    As the derivations above show, in the case of nonnegative matrices, this problem is equivalent to finding the nearest Markov grid to a given Markov chain. 
    Moreover, it could be interesting to consider the decomposition of $P$ as the sum of Kronecker products of different sizes, meaning that the underlying Markov chain is decomposed into Markov grids with different structures/connections.   
\end{remark}

\paragraph{Computational advantages}   

When $t$ is small, it is more convenient to keep the matrix format; constructing the matrix $P$ explicitly would be prohibitively expensive, with $O\big( (mn)^2 \big)$ memory and $O\big( (mn)^4 \big)$ operations when performing matrix-vector products. 
 Of course, in many Markov chains, $P$ is sparse and hence the computational load is significantly lower.  
 However, computing a low-rank approximation to the system $X = \mathcal A(X)$ would probably be less straightforward in the vector representation. 
Low-rankness can ``filter part of the noise'' in the chain, because we assume the underlying solution should have a low rank.

%\ngc{I am not sure what you mean by shift, but the eigenvalue 1 is the rightmost eigenvalue so whatever worked before should work for this particular case.} \textcolor{blue}{The main tool of \cite{guglielmi2021computing} is the dynamical system, which is the "continuous analogous" of the power method. Since $\lambda =1 $ is the rightmost we are fine with the general method. I run some examples with the code, directly imposing in all the iterations that the eigenvalues is 1.It seems to me that it takes many many fewer iterations.}\\ This is related to matrix equations~\cite{simoncini2016computational}. 

%Has this been studied before? Has algorithms to compute low-rank solutions been developed?  \textcolor{blue}{ I checked but I didn't find anything low rank solutions for this particular case. Perhaps the closest reference is this one \cite{kressner2014}, where they clearly pose the problem of combining low rank and non negativity.}\\

%Motivations? Such Markov chains would make sense if the underlying graph is close/similar to a grid where nodes are connected in the rows and columns of $X$. However, this requires to have groups of $mn$ nodes connected in a rather specific way, with $m$ groups of $n$ nodes and $m$ groups of $n$ nodes. However, all the nodes in each group do not need to interact. Interesting question: Given a large Markov chain, can we detect such connections? Probably this is hard, would make more sense to construct it. 

%It would be nice to find some illustrative examples. 

\paragraph{Example: Tracking a moving object}

Assume an object is moving randomly in a given space, e.g., a person randomly walking in a city. 
We can subdivide the space into a grid, and model that movement as a Markov chain on the grid. Of course, this requires to keep the symmetry in the grid, that is, the probabilities to move along states on the same row (resp.\ column) should be the same regardless of the column (resp.\ row) we are in.    

%Moreover, using terms of the form $A X_k B$, we can add transitions from nodes to nodes, e.g., if there is a subway system, we can model the person taking the subway with some probability, and moving to another subway station with some probability. 
% In our 3-by-3 grid, assume there is a possible to move from (1,1) to (3,3) in one step, and that the probability to pick that transition is 1/3. We will add a term in the model of the form $A_s X_k B_s$, where 
% \[
% A_s = \begin{pmatrix}
% 0 & 0 & 0 \\
% 0 & 0 & 0 \\
% 0 & 0 & 0      
% \end{pmatrix}, 
% \quad
% B_s = \begin{pmatrix}
% 0 & 0 & 0 \\
% 0 & 0 & 0 \\
% 0 & 0 & 0        
% \end{pmatrix} = A^\top.  
% \]
%\url{https://www.researchgate.net/publication/301312745_Influential_spatial_facility_prediction_over_large_scale_cyber-physical_vehicles_in_smart_city} 

For an object randomly moving in a 3D volume, we could generalize our constructions by representing the states as a tensor $\mathcal{X} \in \mathbb{R}^{n_1 \times n_2 \times n_3}$, and the transitions would be matrices applied on each dimension (a.k.a.\ mode) of the tensor. This is a topic for further research. Note that low-rank tensor representations have been used to approximate continuous Markov chains with a finite number of states in the context of tumor progression modeling; see~\cite{georg2023low, klever2025}.

\paragraph{Adaptation of the model} 

Of course, the Markov grid model is restrictive, since it requires a very structured chain. However, one could consider a Markov chain of the form 
    \[
P \; = \; \sum_{p=1}^t \alpha_p B_p \otimes A_p + S  \geq 0 , 
    \]
where $S$ is a sparse matrix with $Se=0$. This model is reminiscent to that of sparse + low-rank matrix decompositions~\cite{chandrasekaran2011rank, candes2011robust}, where the low-rank part is replaced by a Kronecker factorization. 

For example, consider the 3-by-3 grid from  Example~\ref{ex:33grid}. We could add a transition between states 1 and 9, e.g., imagine this 3-by-3 grid represents a city where a person performs a random walk, and there is a subway connection between nodes 1 and 9 that one chooses with probability 1/2. Then the new transition matrix, $P'$, can be obtained by  replacing the first and ninth row of $P$ as follows  
\[
P'(1,:) 
= \frac{1}{4} (0,1,0,1,0,0,0,0,2) 
= \frac{1}{4} P(1,:) + \frac{1}{4}   (0,-1,0,-1,0,0,0,0,2), 
\]
\[
P'(9,:) = 
 \frac{1}{4} (2,0,0,0,0, 1,0, 1,0) 
= \frac{1}{4} P(9,:)    + \frac{1}{4} (2,0,0,0,0,-1,0,-1,0).  
\] 
Again, this is out of the scope of this paper to compute such decompositions; this is a topic of further research.

% would happen 
%\url{https://www.nist.gov/system/files/documents/itl/antd/InterAgencyReport7566.pdf}

\paragraph{Low-rankness of solutions} 

Interestingly, for special classes of Markov grids, the stationary state has rank one.  
%There are the grids with the following form 
%For $\|X_0\|_1 = 1$, we have $\|X_k\|_1 = 1$ for all $i$. 
%This is a Markov chain where the states are the entries of $X$, while the transitions only allow to move along the same column ($A X_k$), the same row ($X_k B$), or both consequently ($A X_k B$), but the probably of moves is independent on the columns/rows we are at.  
\begin{theorem} \label{th:rankonesol}
Let $A$ and $B$ be irreducible transition matrices, 
and $\alpha \in \mathbb{R}^3_+$ be on the unit simplex with $\alpha_1 \neq 1$ and $\alpha_2 \neq 1$. 
Consider the Markov grid with transition 
\[
\mathcal A(X) = \alpha_1 A^\top X + \alpha_2 X B + \alpha_3  A^\top X B. 
\] 
Then the stationary state of this Markov grid , $X^*$, has rank-one, and is given by $X^* = \mu_A \mu_B^\top > 0$, 
where $\mu_A > 0$ and $\mu_B > 0$ are the stationary states of $A$ and $B$, respectively.  
    %Let the above Markov process be irreducible, that is, any state is reachable from any state with the same number of steps. 
    %Then the above iterative process converges to the stationary state, $X^* > 0$, such that $\rank(X^*) = 1$. In fact, $X^* = \mu_A \mu_B^\top$ where $\mu_A$ and $\mu_B$ are the stationary states of $A$ and $B$. 
\end{theorem}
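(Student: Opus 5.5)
The plan is to verify directly that $X^* = \mu_A \mu_B^\top$ is a fixed point of $\mathcal A$, and then to show that the fixed point (normalized by $e^\top X e = 1$) is unique. Uniqueness will follow from irreducibility of the vectorized transition matrix $P$ together with the Perron--Frobenius theorem.

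\textbf{Step 1 (fixed point).} By definition of the stationary states we have $A^\top \mu_A = \mu_A$, $B^\top \mu_B = \mu_B$ and $e^\top\mu_A = e^\top \mu_B = 1$. Perron--Frobenius applied to the irreducible stochastic matrices $A$ and $B$ gives $\mu_A, \mu_B > 0$. In the row-vector form used in the paper's convention, $\mu_B^\top B = \mu_B^\top$. Then
\[
\mathcal A(\mu_A\mu_B^\top) = \alpha_1 \mu_A\mu_B^\top + \alpha_2 \mu_A\mu_B^\top + \alpha_3 \mu_A \mu_B^\top = \mu_A\mu_B^\top ,
\]
because $\alpha$ lies on the unit simplex. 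Moreover $e^\top X^* e = 1$, and $X^* > 0$ has rank one.

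\textbf{Step 2 (uniqueness via irreducibility of $P$).} By \eqref{eq:mattovec}, the chain has transition matrix
\[
P = \alpha_1 I \otimes A + \alpha_2 B \otimes I + \alpha_3 B \otimes A ,
\]
which is row stochastic. The stationary distribution of the grid is unique as soon as $P$ is irreducible. To prove irreducibility I will show that any state $(i,j)$ can reach any state $(i',j')$ in the directed graph of $P$. A transition of $A$ from row $i$ to row $k$ can be realized either by the $\alpha_1$ term, which keeps the column fixed, or by the $\alpha_3$ term, which moves the row and the column simultaneously. Likewise, a transition of $B$ can be realized by the $\alpha_2$ or the $\alpha_3$ term.

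\textbf{Step 3 (case analysis on $\alpha$).} This is where the hypotheses enter and is the main obstacle.
\begin{itemize}
\item If $\alpha_1>0$ and $\alpha_2>0$, I first move along column $j$ to row $i'$ using a path in the graph of $A$, which exists by irreducibility of $A$. I then move along row $i'$ to column $j'$ using $B$.
\item If $\alpha_1 = 0$, then $\alpha_2 \neq 1$ forces $\alpha_3>0$, and $\alpha_2$ may or may not vanish. In that case I would use the $\alpha_3$ moves, which change both coordinates at once, combined with the $\alpha_2$ moves when $\alpha_2>0$. Row moves can then only be made jointly with column moves, so I must argue that the column coordinate can be corrected afterwards. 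When $\alpha_2>0$, irreducibility of $B$ lets me return to any column using pure $\alpha_2$ moves.
\item The case $\alpha_2=0$ is symmetric, with $\alpha_1>0$ and $\alpha_3>0$.
\end{itemize}
The delicate subcase is $\alpha_3 = 1$, where $P = B\otimes A$. Irreducibility of the Kronecker product needs more than irreducibility of each factor; for example, two periodic chains with a common period break it.

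\textbf{Fallback if $P$ is reducible.} If irreducibility of $P$ fails there, I would instead argue uniqueness of the invariant probability directly. Any stationary $X \ge 0$ satisfies $A^\top X B = X$. Summing the columns gives $A^\top (X e) = X e$, since $Be = e$, so $X e = c\,\mu_A$. Similarly $X^\top e = c\,\mu_B$. Showing that $X$ itself must then equal $\mu_A\mu_B^\top$ requires an aperiodicity-type argument, and I would check whether the statement implicitly needs it; this is likely where extra care or an added assumption is required. Once uniqueness is established, the stationary state is $X^* = \mu_A\mu_B^\top$ from Step 1, which completes the proof.
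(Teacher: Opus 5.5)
Your plan is the paper's: check that $\mu_A\mu_B^\top$ is a fixed point, then get uniqueness from irreducibility of $P$. Step~1 is correct. Your case analysis proving irreducibility whenever $\alpha_1>0$ or $\alpha_2>0$ is also correct. It is more careful than the paper, which only asserts that $\alpha_1\neq 1$ and $\alpha_2\neq 1$ give irreducibility because ``$A^\top X$ moves along columns and $XB$ along rows''. In your case $\alpha_1=0<\alpha_2$, it is worth saying explicitly that a joint move realizing any given $A$-edge always exists, because every row of $B$ has a positive entry.

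The subcase you leave open, $\alpha_1=\alpha_2=0$, cannot be closed by an aperiodicity argument or any other, because the statement is false there. The paper's proof has a hole at exactly this point: the pure row and column moves it relies on are absent. Take $A=B=\begin{pmatrix}0&1\\1&0\end{pmatrix}$ and $\alpha=(0,0,1)$; these satisfy all the hypotheses. Then $\mathcal A(X)=A^\top X A$. Both $\mu_A\mu_B^\top=\tfrac14 ee^\top$ and $\tfrac12 I$ are nonnegative fixed points with $e^\top X e=1$, and the second has rank two. Both also satisfy your marginal identities $Xe=\mu_A$ and $X^\top e=\mu_B$, which is why your fallback stalls: the marginals cannot determine $X$.

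So the theorem needs an extra hypothesis, and you should state it rather than look for a proof. One option is to add $\alpha_3\neq 1$; your case analysis plus uniqueness of the stationary distribution of an irreducible chain is then already a complete proof. The other option, when $\alpha_3=1$, is to require the periods of $A$ and $B$ to be coprime (for instance, one of them aperiodic). Then $B\otimes A$ is irreducible, because $(B\otimes A)^s=B^s\otimes A^s$ and the Chinese remainder theorem gives an $s$ with $(A^s)_{ii'}>0$ and $(B^s)_{jj'}>0$ simultaneously. Conversely, if $A$ and $B$ share a period $d>1$, the quantity (cyclic class of the row minus cyclic class of the column) mod $d$ is invariant under the chain. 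The chain therefore has $d$ disjoint closed classes, and uniqueness genuinely fails.
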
 
\begin{proof}
First, note that $\alpha_1 \neq 1$ and $\alpha_2 \neq 1$ implies that the Markov grid is irreducible, that is, one can transition from any node to any node. 
In fact, $A^\top X$ allows one to move anywhere along columns in a finite number of steps, while $X B$ allows one to move anywhere along rows in a finite number of steps, since $A$ and $B$ are irreducible. 

Then it is easy to verify that $X^* = \mu_A \mu_B^\top > 0$ is the (unique) stationary state: 
\[
\mathcal A(X^*) =  
\alpha_1 A^\top (\mu_A \mu_B^\top) + \alpha_2 (\mu_A \mu_B^\top) B + \alpha_3 A^\top (\mu_A \mu_B^\top) B 
= \mu_A \mu_B^\top,  
\]
since $A^\top \mu_A = \mu_A$ and $B^\top \mu_B  = \mu_B$.  
\end{proof}

\paragraph{Nonnegativity of low-rank solutions} 
As explained in the introduction, our goal would be to compute a low-rank approximation to the rightmost eigenpair $(\lambda, X)$ of the operator $\cA$.
%$\mathcal A(X) = \lambda X$ for the rightmost eigenvalue of $\mathcal A$. %Note that, for Markov grids, we will have $\lambda = 1$. 
For applications such as Markov grids, nonnegativity of the solution is crucial to be able to meaningfully interpret it. 
Note that stationary states of nonnegative linear operators are nonnegative. However, a nonnegative matrix may not have a nonnegative best rank-$r$ approximations, e.g., when it is sparse (or has sufficiently many small entries), or has some 
structure~\cite{song2020nonnegative, song2022tangent}. 
Hence it is better to enforce nonnegativity of the solution within the algorithm, as opposed to a suboptimal two-step approach that would project an unconstrained solution onto the set of low-rank nonnegative matrices. This will be illustrated in Section~\ref{sec:numexp}.

\begin{example} \label{ex:neglowrank33}
% Given an integer $n$ and $\epsilon < 1/n$, let us define the $n$-by-$n$ matrix 
% \[
% A_{\epsilon} = \left( \begin{array}{cccc}
%     1-(n-1) \epsilon  & \epsilon & \dots  & \epsilon  \\
%     1 & 0 & \dots & 0 \\  
%     \vdots & \vdots &  & \vdots \\  
%     1 & 0 & \dots & 0  
% \end{array} \right). 
% \]
% The stationary state of $A_{\epsilon}$ is $\mu_A =  \frac{1}{1+(n-1)\epsilon} [1,\epsilon,\dots,\epsilon]$.  
% Let us define the Markov grid   
% \[
%  X_{k+1} = \frac{1}{n} \sum_{p=1}^n A_p^\top X_k B_p , 
% \] 
% where, for all $p \in [n]$, $A_p = B_p = A_{\epsilon_p}(\pi_p,\pi_p)$ with $\pi_p = [p,p+1,\dots, n, 1, 2,\dots, p-1]$ are permutations. Hence the stationary state of each pair $(A_p,B_p)$, a permutation of $\mu_A \mu_A^\top$, is close to a matrix with most of its energy is located on the $k$th diagonal entry. 
% Hence the stationary state of the system above should be close to a diagonal matrix, when the $\epsilon_p$'s are small. 

% \ngc{Can we prove this class of matrices has negative entries in best rank-$r$ approximations?} 
Consider the Markov grid defined by the linear operator %as above with     $\epsilon = [0.1, 0.15, 0.2]$, so that  
\[
\mathcal{A} (X) = \frac{1}{3} \left( A_1^\top X B_1 + A_2^\top X B_2 + A_3^\top X B_3 \right),  
\]
where  
\[
A_1 = B_1 =
\begin{pmatrix}
0.9 & 0.05 & 0.05 \\
1 & 0      & 0      \\
1 & 0      & 0      
\end{pmatrix}, 
A_2 = B_2 =
\begin{pmatrix}
0      & 1 & 0      \\
0.1 & 0.8 & 0.1 \\
0      & 1 & 0      
\end{pmatrix}, 
A_3 = B_3 = \begin{pmatrix}
0      & 0      & 1 \\
0      & 0      & 1 \\
0.075 & 0.075 & 0.85
\end{pmatrix}. 
\]
The rightmost eigenvalue is $\lambda = 1$, and the corresponding (full rank) eigenmatrix with Frobenius norm equal to 1 is 
\[
X^* = \begin{pmatrix}
0.5936 & 0.0277 & 0.0253 \\
0.0277 & 0.5585 & 0.0310 \\
0.0253 & 0.0310 & 0.5753
\end{pmatrix} . 
\]
The best rank-$2$ approximation of $X^*$ is given by 
\[
X_2^* = 
\begin{pmatrix}
0.5787 & 0.1024 & -0.0212 \\
0.1024 & 0.1848 &  0.2638 \\
-0.0212 & 0.2638 & 0.4303
\end{pmatrix}, 
\quad \text{ with } \|X^* - X_2^* \|_F = 0.5336. 
\]
When we apply the dynamical low-rank method~\cite{guglielmi2021computing} to this system 
with rank $2$, we %get $\lambda_2 \approx 2.9928 $ and the following 
obtain the following approximation of the eigenmatrix 
\[
X_2 = \begin{pmatrix}
0.6907 & 0.0998  & -0.0172 \\
0.0998 & 0.2563  &  0.3325 \\
-0.0172 & 0.3325 &  0.4644
\end{pmatrix}, \quad \text{ with } \|X^* - X_2 \|_F = 0.5558, 
\]
which is only slightly worse than the best rank-2 approximation of $X^*$, $X_2^*$. 
%The errors measured in the Frobenius norm are
% fpr which 
% \[ 
% \| X^*- X_2 \|_F = 0.5558  
% %\quad \text{and} \quad   \| X^*_2 - X_2 \|_F = 0.1683. 
% \]
% This is reasonable since the third singular value of $X^*$ is $\sigma_3 ^*  \approx 0.5337$. 
The method proposed in this paper, see Section~\ref{sec:algo}, 
%The "non negative analogous" %\cite{guglielmi2020efficient}+\cite{guglielmi2021computing} 
will give the following rank-2 nonnegative approximation of the  eigenmatrix 
\[
X_2^+ = \begin{pmatrix}
0.6967 & 0.0774 & 0 \\
0.0774 & 0.2693 & 0.3406 \\
0 & 0.3406 & 0.4450
\end{pmatrix}, \quad \text{ with } \|X^* - X_2 \|_F = 0.5561, 
\]
while  $\| X_2 - X_2^+ \|_F = 0.0481$. 
%Is $Y_2$ an approximate solution of the eigensystem. Approximation in what sense? Does it guarantee to find the best rank-$r$ solution of the true high-rank solution? Or something else? The dynamical system is (maybe???) trying to minimize $\| \mathcal{A} X - \lambda X \|_F^2$ ($\lambda = 1$ in this case)? \textcolor{blue}{I think I tried to put all in the section The method. If something is not clear, tell me.}
\end{example}

% \begin{remark}
% On larger examples, could nonnegativity be a measure of rank adaptation?\\
% On the one hand, we can force the solution to be low rank and not negative, as in \cite{guglielmi2020efficient}. On the other hand, we could see the presence of negativity as an index of too low rank and thus grow the rank directly during numerical integration. (perhaps)
% \end{remark}

%I agree that it is a bit hard to justify/motivate the development of a specific algorithm taking nonnegativity into account. However, it makes sense, and we can generate examples. Michael Ng has a line of work on this, finding the nearest nonnegative and low-rank matrix to a given nonnegative one; see, e.g., \cite{song2020nonnegative, song2022tangent} \textcolor{blue}{I will take a look to the papers.}. To be discussed and decided. However, what I find interesting and fun is the study of "Matrix Markov Chains". I have not seen this before. It could be a way to represent compactly certain types of Markov chains. 

%To compute the stationary state, we can use the power method, of course, which will converge linearly if the matrix $M$ is regular. Each iteration costs $O\big(t mn (m+n) \big)$ operations to compute the $t$ products $A_p X_{k+1} B_p$. If the $X_k = W_k H_k$'s are of low-rank $r$, we could reduce this cost significantly, for $r$ small, to $O\big(t r (mn + m^2 + n^2) \big)$. 

\subsection{Positive Dominant Modes in Spatially Structured Systems} \label{sec:metzler}

Reaction–diffusion equations provide a natural and significant framework for studying the spectral structure of growth–diffusion operators arising in population and epidemic dynamics~\cite{cantrell2003spatial}. In such systems, the long-term behavior of the population depends on the spectral properties of the underlying operator. In particular, the sign of the principal eigenvalue $\lambda_1$, that is, the eigenvalue with maximal real part, determines whether the population persists ($\lambda_1 > 0$) or becomes extinct ($\lambda_1 < 0$). This quantity acts as an effective spatial average of local demographic rates (birth, death, and dispersal), weighted by the spatial distribution of the population.

A classical  example is the KISS model (Kierstead–Slobodkin–Skellam), which describes diffusion and growth in a bounded habitat surrounded by a hostile exterior. In its simplest form,
\[
u_t = D \Delta u + r\,u \quad \text{in }\Omega, \qquad u|_{\partial\Omega} = 0,
\]
where $\Omega \subset \mathbb{R}^n$ is a bounded and sufficiently regular domain, $D > 0$ is the diffusion coefficient, and $r$ is the intrinsic growth rate. Seeking separable solutions of the form $u(x,t) = e^{\lambda t}\psi(x)$ leads to the spectral problem
\[
D\Delta \psi + r \psi = \lambda \psi, \qquad \psi|_{\partial\Omega} = 0.
\]
The principal eigenvalue $\lambda_1$ determines the asymptotic behavior of the system, while the associated eigenfunction $\psi_1$ describes the stationary spatial profile. According to the Krein-Rutman theorem, under standard ellipticity and regularity assumptions, $\lambda_1$ is real, simple, and its eigenfunction $\psi_1$ can be chosen strictly positive within $\Omega$.

Spatial heterogeneity can be incorporated by allowing space-dependent diffusion and growth coefficients: 
\[
u_t = \nabla \!\cdot ( D(x) \nabla u ) + r(x) u,
\]
where $D(x)$ is uniformly elliptic and $r(x)$ represents the local net growth rate. Different boundary conditions (Dirichlet, Neumann, or Robin) correspond to different ecological or physical settings and modify the spectral properties accordingly. Nevertheless, the qualitative behavior remains the same: the dominant eigenvalue governs persistence, and the corresponding eigenfunction retains strict positivity, see \cite{cantrell2003spatial}.

In spatially extended population or epidemic systems, analytical expressions for the dominant eigenpair are rarely available. Discretization of the operator, for instance through finite differences, leads to large sparse matrices whose dominant eigenpair approximates $(\lambda_1, \psi_1)$. 
From a physical or biological standpoint, the numerical scheme must preserve the positivity of the dominant eigenvector, since negative components lack meaningful interpretation in these contexts.

In many cases (see Section \ref{sec:numexp} for some examples), the discretized operator is a Metzler matrix, that is, a real matrix with nonnegative off-diagonal entries. For such matrices, the Perron–Frobenius theorem guarantees the existence of a simple real dominant eigenvalue with a strictly positive eigenvector. %Our low-rank positivity-preserving algorithm applies to other classes of matrices arising in these models. 

From a computational perspective, these considerations motivate the development of low-rank algorithms for approximating the dominant eigenpair while preserving the non-negativity of the eigenmatrix, ensuring intrinsic physical interpretability. 
Moreover, the structure of the operators arising from reaction–diffusion semi-discretizations — often of the form $A X + X A^\top + \dots$, with $A$ deriving from the Laplacian — naturally leads to solutions that can be well-approximated by low-rank matrices, see \cite{guglielmi2021computing}.

Finally, if the discretized operator is Metzler and Hurwitz (that is, with all eigenvalues have negative real parts), the implicit Euler scheme preserves the positivity of the flow of the linear semi-discrete system for $t>0$~\cite{hundsdorfer2003numerical}. 
From the numerical viewpoint, the spatial discretization of diffusion–reaction systems typically leads to stiff ODEs, especially on fine spatial grids. Implicit or semi-implicit time-stepping schemes are therefore recommended, as they maintain stability even for large diffusion coefficients. 
This correspondence naturally bridges continuous-space ecological or epidemic models with discrete-time dynamical systems, facilitating reliable numerical simulations.

In Section~\ref{sec:numexp}, we will provide a numerical example on a reaction diffusion model  with Neumann boundary conditions and spatially varying growth rate $r(x)$, and show how the low–rank positive algorithm proposed in Section~\ref{sec:algo} efficiently captures the principal eigenpair of the corresponding dicretized Metzler operator.

\section{Algorithm using differential equations} \label{sec:algo}  

%\ngc{What abou normalization of a solution, e.g., $e^\top X e = 1$? }
%\carmen{a normalization at the end should be fine, right?} \ngc{Yes, this is OK, but if the algorithm can take the constraint into account (probably hard?) it could be nice. If $X = WH$, $W \geq 0$, $H \geq $ and $e^\top X e = 1$. We need $e^\top W H e = 1$, which is implied by $e^\top W = e^\top /\sqrt{r}$ and $H e =e /\sqrt{r}$ which we can assume w.l.o.g.\ by the scaling degree of freedom. Is this doable in your algorithm?} \ngc{Actually it appears your normalize the solution anyway, so this is fine :-)}
%\carmen{Perfect! we normalize imposing norm 1, but the important thing is to maintain the solution with fixed norm. For instance, we can normalize as $X = X/ sum(X0)$, w.r.t. the initial condition, but this $X = X/ sum(X)$  changes the norm of the direction at any iteration and it is not very good.}

In this section, we propose an algorithm based on differential equations to solve the nonnegative low-rank leading eigenmatrix problem~\eqref{eq:mainprob}. 
Our algorithm exploits the analysis of the dynamical system studied in \cite{guglielmi2021computing}, which allows us to determine the sought eigenmatrix, as an asymptotically stable equilibrium point of the system. The low-rank approximation proposed in \cite{guglielmi2021computing} is not suitable for extension to the nonnegative case, therefore we propose a dynamical approach based on an NMF of the solution. 
To make the paper as self-contained as possible, we first recall, in Section~\ref{recap}, the results of \cite{guglielmi2021computing}, which will be the starting point for our proposed algorithm described in Section~\ref{sec:ouralgo}, and whose pseudocode along with a computational cost analysis is provided in Section~\ref{sec:pseudocode}. 

%In Section~\ref{}, we provide a peusdocode 

\subsection{Matrix ODE for the low-rank rightmost eigenpair}\label{recap}

%In this section, we briefly summarize the analysis presented in \cite{guglielmi2021computing}. 
Let $\mathcal{A}$ be a real matrix-valued linear operator,
%\begin{equation*}  \label{eq:linop}
$\mathcal{A} : \R^{n \times n} \mapsto \R^{n \times n}$. 
%\end{equation*}
%that is, if $X \in \R^{n \times n}$, then $ \mathcal{A}(X) \in \R^{n \times n}$.
%We are interested in computing its eigenpairs, that is
%We aim to find \textcolor{blue}{rightmost eigenpairs} of $\mathcal{A}$, i. e.  
A pair $(\lambda, X)$ is an eigenpair of $\mathcal{A}$ if it satisfies  
\begin{equation*} 
\mathcal{A}(X) = \lambda X, % \quad  \lambda \in \C, X \in \Cnn, 
\quad X \neq 0, % \mathcal{O} I do not like this notation, I just use 0 
\end{equation*} 
where $X$ and $\lambda$ are allowed to be complex-valued.
The aim is to find the rightmost eigenpairs of $\mathcal{A}$, assuming that $X$ has quickly decaying singular values. 
This assumption is supported by multiple examples of operators; see, e.g., \cite{guglielmi2021computing} and the references therein. 
This motivates to constrain the search of approximate eigensolutions on a low-rank manifold. 
Guglielmi et al.~\cite{guglielmi2021computing} considered the following system of ODEs  
\begin{equation} 
\label{Ode}
\left\{\begin{aligned}
\dot{X}(t) & = \mathcal{A}\left(X(t) \right) - \rho(X(t))\,X(t), 
\\
X(0) & =  X_0, \qquad \| X_0 \| = 1, 
\end{aligned} \right.
\end{equation}
for some real initial $X_0 \in \mathbb{R}^{n \times n}$, where  $\rho(X(t)) = \big\langle \mathcal{A}\left(X(t) \right), X(t) \big\rangle$. 
Is is shown that  the dynamical system \eqref{Ode} satisfies the following  properties :  
\begin{enumerate}[label=\textbf{P\arabic*.}, leftmargin=1.5em]

\item \textbf{Norm preservation.}  
The flow is confined to the unit norm sphere $\mathcal{B} := \{ Z \in \mathbb{R}^{n \times n} : \| Z \|_F = 1 \}$. That is, for any initial condition $X(0) \in \mathcal{B}$, the solution satisfies $X(t) \in \mathcal{B}$ for all $t \geq 0$.

\item \textbf{Characterization of equilibria.}  
A matrix $X \in \mathcal{B}$ is a fixed point of the flow if and only if it satisfies $\mathcal{A}(X) = \lambda X$, that is, $X$ is an eigenmatrix of $\mathcal{A}$.

\item \textbf{Stability of the dominant eigenmatrix.}  
Assume that $\mathcal{A}$ has a unique real eigenvalue $\lambda_1$ with maximal real part (rightmost). Let $V \in \mathcal{B}$ be an eigenmatrix associated with a simple real eigenvalue $\lambda$. Then $V$ is a locally asymptotically stable equilibrium of the flow if and only if $\lambda = \lambda_1$.

%\item \textbf{Aperiodicity under real spectrum.}  
%If all eigenvalues of $\mathcal{A}$ are real and simple, then the solution trajectories of the system are non-periodic.

\item \textbf{Asymptotic convergence.}  
Suppose that $\mathcal{A}$ has a unique simple real rightmost eigenvalue $\lambda_1$ with associated eigenmatrix $V_1 \in \mathcal{B}$. If the initial condition $X(0) = X_0 \in \mathcal{B}$ satisfies $\langle X_0, V_1 \rangle \neq 0$, then the solution converges to the dominant eigenmatrix, up to its sign:  
\[
\lim_{t \to \infty} X(t) = \pm V_1.
\]
Note that, by the Perron-Frobenius theorem, in the nonnegative case, the rightmost eigenvalue is always real.
\end{enumerate}
%\begin{remark}
%\end{remark}
 
Therefore, integrating the system \eqref{Ode} for a sufficiently long time, we find the desired eigenmatrix as a stationary point. 
\begin{remark} 
\label{ray_rem}
    If $\cA$ is self-adjoint, \eqref{Ode} is a gradient system for the Rayleigh quotient $\rho(X) = \langle X, \cA(X) \rangle$. The maximum eigenvalue is obtained by maximizing $\rho(X)$.
\end{remark}
The low-rank approximation of the rightmost eigenmatrix proposed in \cite{guglielmi2021computing} is based on a dynamical low-rank approximation \cite{koch2007dynamical} for the evolution equation \eqref{Ode}.
For a fixed value of the rank $r \leq n$, the solution is represented in a factorized form as
\begin{equation}
\label{usv}
X(t) = U(t) S(t) V(t)^\top,
\end{equation}
where \( U(t), V(t) \in \mathbb{R}^{n \times r} \) have orthonormal columns, and \( S(t) \in \mathbb{R}^{r \times r} \). By constraining the dynamics to the manifold of matrices of rank at most \( r \), this method allows  an efficient integration of the system in time.

A particularly effective strategy to compute the factors over time is the {Projector-Splitting Integrator (PSI)} \cite{lubich2014projector}, which updates \( U(t) \), \( S(t) \), and \( V(t) \) sequentially in a way that preserves the rank constraint and ensures numerical stability. In \cite{guglielmi2021computing}, a suitable variant of PSI is used to integrate the system \eqref{Ode}  in time, until the solution converges to a steady state, which provides a rank-\( r \) approximation of the dominant eigenmatrix associated with the rightmost eigenvalue of the operator \( \mathcal{A} \).

\begin{remark}
Using the results of dynamical low-rank theory, see, e.g., \cite{koch2007dynamical, lubich2014projector}, 
it is possible to analytically define the projected system onto the tangent space of the manifold \( \mathcal{M}_r \) of matrices of rank at most \( r \). In the case where \( \mathcal{A} \) is self-adjoint, the projected system turns out to be a gradient flow for the objective function \( \rho(X(t)) \), which enables a complete qualitative analysis, including equilibrium structure and asymptotic behavior. 

In contrast, for a general (non-symmetric) operator \( \mathcal{A} \), the projected system becomes fully nonlinear and does not retain a gradient structure, making its analytical study substantially more difficult. The characterization of stationary points and their long-term behavior in this setting remains an open and challenging problem, see \cite{guglielmi2021computing}.
\end{remark}

%Remark.
%I thought that the dynamics (in the generic case) perhaps could be improved by working with adaptive rank. What I mean is that, with low rank dynamics techniques we could integrate at higher rank in the beginning and then gradually decrease as we gradually approach the stationary point. This could improve the approximation, which in general, even at fixed rank is definitely good.

%\begin{remark}
%I thought that the dynamics (in the generic case) perhaps could be improved by working with adaptive rank. What I mean is that, with low rank dynamics techniques we could integrate at higher rank in the beginning and then gradually decrease as we gradually approach the stationary point. This could improve the approximation, which in general, even at fixed rank is definitely good.
%\end{remark}

\subsection{Taking nonnegativity into account} \label{sec:ouralgo}

%As mentioned in the introduction, %based on Perron's Theorem, 
%How appropriate, 
Now we would like to preserve the nonnegativity of the low-rank eigenmatrix. 
The approach described in the previous section is not trivially adaptable, since the dynamical low-rank approximation is based on the decomposition \eqref{usv}, which involves orthogonality conditions. For simplicity of the presentation, we will omit the argument $t$ when it is clear from the context.   
% Considering what has been said for the dynamical  system \eqref{Ode}, we can see it as finding $\dot{X}$ such that
% \[  
% \max_{\|X\| = 1} \langle \dot{X},\cA(X) \rangle  
% \] 
Let $$ \mathcal{G}(X) = \cA(X)-\rho(X) X.$$ 
Our idea is to interpret the differential equation \eqref{Ode} as the problem of choosing the direction of instantaneous variation $\dot{X}$ that maximally aligns with $\mathcal{G}(X)$, that is, 
\begin{equation}
\label{theproblem}
\dot{X} = \argmax_{ Z \in \mathbb{R}^{n \times n}} 
\langle Z,\mathcal{G}(X) \rangle.
\end{equation}
As discussed in the previous section, although \eqref{Ode} is a gradient system only when the operator is symmetric, the flow still drives the solution toward the desired eigenmatrix also in the general (non-symmetric) case.
This formulation will be useful in incorporating the low-rank  and non-negativity constraints.

Similarly as in~\cite{guglielmi2020efficient}, for a fixed rank $r$, we parameterize the solution as 
$$
X = UV^\top,
$$
with $U, V \in \R^{n \times r}$. 
To obtain the nonnegativity of $X$, we constrain the matrices $U(t)$ and $V(t)$ to be nonnegative and, to maintain nonnegativity, we need to impose that 
$\dot{U}_{i,j}(t) \geq 0$ (resp.\ $\dot{V}_{i,j}(t) \geq 0$) for all $(i,j)$ such that $U_{i,j}(t) = 0$ (resp.\ $V_{i,j}(t) = 0$).   
To do so, for a nonnegative matrix $W \in \mathbb{R}^{n\times r}$, we define: 
\begin{equation}
    E_0(W) := \{(i,j) : W_{ij} = 0\}, 
\end{equation}
and, for a matrix $Z$ of the same dimension as $W$, 
\begin{equation}
   P_W(Z) := 
    \begin{cases} 
        Z_{ij}, & \text{if } (i,j) \in E_0(W) , \\
        0, & \text{otherwise}
    \end{cases} 
\end{equation}
which is the orthogonal projection onto the set of $n$-by-$r$ matrices with zero entries not in $E_0(W)$. 
%We denote, for short, $P_W^+ = P_{E_0(W)} $.\\
We say that $U(t)$, $V(t)$ are feasible paths if $U(t) \geq 0$ and $V(t) \geq 0$ for all $t \geq 0$. 
For every path $U(t) \in \mathbb{R}^{n\times r}$ and $V(t) \in \mathbb{R}^{n\times r}$, the matrices $Z_U, Z_V$ are the derivatives of some path of \emph{feasible} matrices passing through $(U, V)$ if and only if the following two conditions are satisfied: $P_U (Z_U) \geq 0$ and  $P_V (Z_V) \geq 0$, because zero entries in $U$ and $V$ cannot be made smaller, which requires $(Z_U)_{ij} \geq 0$ when $U(i,j) = 0$, and similarly for $V$.

% \begin{align*}
%  \text{i)}  \;\; P_U^+ Z_U &\geq 0,   \nonumber \\
%  \text{ii)}  \;\; P_V^+ Z_V &\geq 0. \nonumber
% \end{align*}
%\ngc{What are $P_U^+$ and $P_V^+$ ?}

We aim to define evolution equations for \(U(t)\) and \(V(t)\) such that the composed matrix \(X(t) = U(t)V(t)^\top\) follows the same dynamics as \(\dot{X} = G(X)\), at least in the manifold of nonnegative rank-\(r\) matrices.
Let us apply the Leibniz rule 
\begin{equation*}
\frac{d}{dt} (U V^\top) = \dot{U} V^\top + U \dot{V}^\top.
\end{equation*}
and let  
\begin{equation*}
\mathcal{G}_1 =  \cA(UV^\top )  - \langle UV^\top,   \cA(UV^\top) \rangle UV^\top.
\end{equation*}
Denoting $\dot{U}$ by $Z_U$ and $\dot{V}$ by $Z_V$, we consider the optimization problem: 
\begin{equation*}
\max_{Z_U, Z_V} \langle Z_U V^\top + U Z_V^\top, \mathcal{G}_1 \rangle \; \text{ such that } \; 
P_U (Z_U) \geq 0,  P_V (Z_V) \geq 0, 
\| Z_U \|_F = 1 \text{\;and\;} \| Z_V \|_F = 1.
\end{equation*}
Since we are searching for directions, we introduced normalization conditions. 
Using the linearity of the inner product, replacing the max by a min and observing that both variables do not interact, 
we rewrite the above problem as two independent problems: 
% \begin{equation*}
% \max \langle Z_U,\mathcal{G}_1 V \rangle + \langle Z_V, \mathcal{G}_1^\top U \rangle \quad \text{subject to i) and ii) and} \| Z_U \|_F = 1 \text{\;and\;} \| Z_V \|_F = 1.
% \end{equation*}
% Since we are optimizing a sum of functions with different independent variables, we split the problem in optimizing each function separately, considering the two sub-problems:
% %\ngc{Are the two problems below solved independently? This is unclear since i) and ii) are for both $U$ and $V$.} 
% \begin{align}
% \max_{Z_U} \langle Z_U, \mathcal{G}_1 V \rangle & \quad \text{subject to i)  and} \langle Z_U, Z_U \rangle = 1, \label{maxU}\\
% \max_{Z_V} \langle Z_V, \mathcal{G}_1^\top U \rangle & \quad \text{subject to ii) and  } \langle Z_V, Z_V \rangle = 1. \label{maxV}
% \end{align}
% The solutions $Z_U$ and $Z_V$ are mutually proportional with positive coefficients, so we choose the direction such that $\|Z_U\|_F = 1$ and $\|Z_V\|_F = 1$.
%Now, let us formulate the problem equivalently as 
\begin{align}
\min_{Z_U} \langle Z_U, -\mathcal{G}_1 V \rangle & \quad \text{ such that } \quad P_U(Z_U) \geq 0  \text{ and }  \langle Z_U, Z_U \rangle = 1, \label{maxU} \\
\min_{Z_V} \langle Z_V, -\mathcal{G}_1^\top U \rangle & \quad \text{ such that } \quad  P_V(Z_V) \geq 0 \text{ and }  \langle Z_V, Z_V \rangle = 1. \label{maxV}
\end{align}
Since these two problems are essentially the same, let us focus on \eqref{maxU}. 
Similarly to \cite{guglielmi2021computing}, let us reformulate it as a quadratic optimization problem with linear constraints. 
%problem~\eqref{maxU} has a quadratic constraint. 
Assume there exists a strict descent direction exists, that is, there exist $Z_U$ such that $\langle Z_U, -\mathcal{G}_1V \rangle < 0$, under the constraint $P_U(Z_U) \geq 0$. 
This is guaranteed as long as $P_U(\mathcal{G}_1V) \neq 0$. 
%there exists an entry $(i,j)$ such that $(\mathcal{G}_1V)_{i,j} \neq 0$ and $(i,j) \notin E_0(W)$, or $(\mathcal{G}_1V)_{i,j} > 0$ and $(i,j) \in E_0(W)$. 
Given such a descent direction, $Z_U$, 
we can rescale it such that $\langle \alpha Z_U, -\mathcal{G}_1V \rangle = -1$, with the scalar $\alpha = \langle \alpha Z_U, \mathcal{G}_1V \rangle^{-1}$. 
%We now formulate a quadratic optimization problem with linear constraints, which is equivalent in the sense that it yields the same descent direction, provided that a strict descent direction exists, that is, such that $\langle Z_U, -\mathcal{G}_1V \rangle < 0$, under the constraint $P_U^+ Z_U \geq 0$. This is based on the fact that when
%$\langle Z_U, -\mathcal{G}_1 V \rangle < 0$, there exists a scaling factor $\alpha > 0$ such that $\langle \alpha Z_U, -\mathcal{G}_1V \rangle = -1$. 
Hence under the existence of a descent direction, the problem~\eqref{maxU} is equivalent to %\ngc{you said you would focus only on (8): why bring back (9) already?}
\begin{align}
&\min_{Z_U} \langle Z_U, Z_U \rangle  \quad \text{ such that } \quad P_U(Z_U) \geq 0  \text{ and }  \langle Z_U,  \mathcal{G}_1 V \rangle = 1 \label{maxU1} 
% \\
% &\min_{Z_V} \langle Z_V, Z_V \rangle  \quad \text{ such that } \quad  P_V^+ Z_V \geq 0 \text{ and } \langle Z_V, \mathcal{G}_1^\top U \rangle = 1.\label{maxV1}
\end{align} 
%As the constraints act independently on each matrix entry and there is no interaction among them, the result can be derived by considering the scalar (vector) case without loss of generality. 
% For a nonnegative matrix $W \in \mathbb{R}^{n\times r}$, we define: 
% \begin{equation}
%     E_0(W) := \{(i,j) : W_{ij} = 0\}, 
% \end{equation}
% and, for a matrix $Z$ of the same dimension as $W$, 
Defining 
\begin{equation}
   P_W^+(Z) := 
    \begin{cases} 
        Z_{ij}, & \text{if } W_{ij} > 0, \\
        \max(0, Z_{ij}), & \text{if } W_{ij} = 0, 
    \end{cases} 
\end{equation}
for any $Z \in \mathbb{R}^{n\times r}$, 
the problem~\eqref{maxU1} can be solved in closed form, 
with optimal solution $Z_U^* = \frac{P_U^+( \mathcal{G}_1 V)}{\langle \mathcal{G}_1 V, P_U^+( \mathcal{G}_1 V)\rangle}$, 
as a consequence of the following lemma (in vector form).  
\begin{lemma}
Let \( a \in \mathbb{R}^n \) and let \( \Omega \subset \{1, \dots, n\} \) be an index set such that there exists at least one index \( i \notin \Omega \) with \( a_i > 0 \). Consider the optimization problem
\begin{equation} \label{eq:opt_problem}
    \min_{x \in \mathbb{R}^n} \|x\|_2 \quad \text{subject to} \quad \langle a, x \rangle = 1, \quad x_i = 0 \text{ for all } i \in \Omega.
\end{equation}
Then the unique optimal solution is given by
\[
x^* = \frac{P_{\mathcal{I}}(a)}{\langle a, P_{\mathcal{I}}(a) \rangle},
\]
where \( \mathcal{I} := \{1, \dots, n\} \setminus \Omega \), and \( P_{\mathcal{I}}(a) \in \mathbb{R}^n \) is the projection of \( a \) onto the coordinates indexed by \( \mathcal{I} \), that is,
\[
\big(P_{\mathcal{I}}(a)\big)_i = \begin{cases}
a_i, & \text{if } i \in \mathcal{I}, \\[1mm]
0, & \text{if } i \in \Omega.
\end{cases}
\]
\end{lemma}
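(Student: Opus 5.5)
Since every feasible $x$ satisfies $x_i = 0$ for $i \in \Omega$, the plan is to reduce the problem to a minimum-norm problem over the coordinates in $\mathcal{I}$ and then apply Cauchy--Schwarz. Write $a_{\mathcal{I}} := P_{\mathcal{I}}(a)$. For any feasible $x$ we have $x = P_{\mathcal{I}}(x)$, hence
\[
1 = \langle a, x\rangle = \langle a, P_{\mathcal{I}}(x)\rangle = \langle P_{\mathcal{I}}(a), x\rangle = \langle a_{\mathcal{I}}, x\rangle ,
\]
because $P_{\mathcal{I}}$ is an orthogonal (self-adjoint) coordinate projection. The hypothesis that some $i \notin \Omega$ has $a_i > 0$ gives $a_{\mathcal{I}} \neq 0$, so $\langle a, a_{\mathcal{I}}\rangle = \|a_{\mathcal{I}}\|_2^2 > 0$ and the formula for $x^*$ is well defined. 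The argument below uses only $a_{\mathcal{I}} \neq 0$; the positivity of $a_i$ is stronger than needed and matters only in the paper's application with $P_U^+$.

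The first step is a lower bound. By Cauchy--Schwarz, $1 = \langle a_{\mathcal{I}}, x\rangle \le \|a_{\mathcal{I}}\|_2 \|x\|_2$, so every feasible $x$ satisfies $\|x\|_2 \ge 1/\|a_{\mathcal{I}}\|_2$.

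The second step checks that $x^* = a_{\mathcal{I}}/\|a_{\mathcal{I}}\|_2^2$ attains the bound. It vanishes on $\Omega$ by construction, it satisfies $\langle a, x^*\rangle = \langle a_{\mathcal{I}}, a_{\mathcal{I}}\rangle/\|a_{\mathcal{I}}\|_2^2 = 1$, and its norm is $\|x^*\|_2 = 1/\|a_{\mathcal{I}}\|_2$. It is therefore optimal, and it coincides with the stated formula since $\langle a, P_{\mathcal{I}}(a)\rangle = \|a_{\mathcal{I}}\|_2^2$.

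The third step, uniqueness, is the only point that needs care. I will use the equality case of Cauchy--Schwarz: any optimal $x$ attains equality, so $x = c\, a_{\mathcal{I}}$ for some scalar $c$, and the constraint $\langle a_{\mathcal{I}}, x\rangle = 1$ forces $c = 1/\|a_{\mathcal{I}}\|_2^2$. As an alternative route, the feasible set is a nonempty affine subspace and $\|\cdot\|_2^2$ is strictly convex, which also gives uniqueness.

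Finally, to connect with the use in \eqref{maxU1}, the inequality constraints $P_U(Z_U) \ge 0$ would be handled by noting that at the optimum the entries of $E_0(U)$ where $(\mathcal{G}_1 V)_{ij} \le 0$ are set to zero. This identifies $\Omega$ and yields $P_U^+$. This reduction is not part of the lemma, so I would only remark on it.
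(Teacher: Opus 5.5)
Your proof is correct, but it takes a different route from the paper's. The paper forms the Lagrangian of $\tfrac12\|x\|_2^2$, with a multiplier $\lambda$ for $\langle a,x\rangle=1$ and multipliers $\mu_i$ for $x_i=0$. It reads off $x_i=\lambda a_i$ on $\mathcal{I}$ from stationarity and fixes $\lambda$ from the equality constraint. For optimality and uniqueness it relies on KKT conditions being necessary and sufficient, plus strict convexity. You instead rewrite the constraint as $\langle P_{\mathcal I}(a),x\rangle=1$ and let Cauchy--Schwarz give the lower bound, the attainment, and uniqueness through the equality case.

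What your version buys:
\begin{itemize}
\item It is fully elementary, with no constraint qualification or convex-duality facts.
\item It yields the optimal value $1/\|P_{\mathcal I}(a)\|_2$.
\item It checks that the denominator $\langle a,P_{\mathcal I}(a)\rangle=\|P_{\mathcal I}(a)\|_2^2$ is positive.
\item It makes clear that only $P_{\mathcal I}(a)\neq 0$ is needed.
\item It sidesteps the paper's strict-convexity claim, which holds for $\|x\|_2^2$ (the function the Lagrangian actually uses) rather than for $\|x\|_2$.
\end{itemize}

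What the multiplier computation buys is a template for the inequality-constrained problem \eqref{maxU1}. Adding sign conditions and complementary slackness there is where $P_U^+$ really comes from. Your argument carries over just as cheaply, though, and doing so would turn your closing remark from an assertion about the active set into a proof. Take $Z$ feasible for \eqref{maxU1} and assume $P_U^+(\mathcal{G}_1V)\neq 0$. Then $Z_{ij}\ge 0$ on $E_0(U)$. Also, $\mathcal{G}_1V-P_U^+(\mathcal{G}_1V)$ is supported on entries of $E_0(U)$ where $\mathcal{G}_1V$ is negative. Hence
\[
1=\langle Z,\mathcal{G}_1V\rangle\le\langle Z,P_U^+(\mathcal{G}_1V)\rangle\le\|Z\|_F\,\|P_U^+(\mathcal{G}_1V)\|_F .
\]
The matrix $P_U^+(\mathcal{G}_1V)/\|P_U^+(\mathcal{G}_1V)\|_F^2$ is feasible and attains this bound, and uniqueness again follows from the equality case.
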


\begin{proof}
Problem~\eqref{eq:opt_problem} is convex, with a strictly convex objective and affine equality constraints. Therefore, the Karush-Kuhn-Tucker (KKT) conditions are both necessary and sufficient for optimality. 
We introduce the Lagrangian:
\[
\mathcal{L}(x, \lambda, \mu) = \frac{1}{2} \|x\|_2^2 - \lambda \big( \langle a, x \rangle - 1 \big) - \sum_{i \in \Omega} \mu_i x_i,
\]
where \( \lambda \in \mathbb{R} \) is the multiplier for the equality constraint \( \langle a, x \rangle = 1 \), and \( \mu_i \in \mathbb{R} \) are multipliers for the constraints \( x_i = 0 \), for \( i \in \Omega \). 
The KKT stationarity condition reads:
\[
\nabla_x \mathcal{L}(x, \lambda, \mu) = x - \lambda a - \sum_{i \in \Omega} \mu_i e_i = 0,
\]
which implies
%\[
$x = \lambda a + \sum_{i \in \Omega} \mu_i e_i$.
%\]
Evaluating this componentwise: 
\begin{itemize}
\item For \( i \in \Omega \), the constraint \( x_i = 0 \) yields:
%\[
$0 = \lambda a_i + \mu_i \quad \Rightarrow \quad \mu_i = -\lambda a_i$.
%\]
\item For \( i \in \mathcal{I} \), we have \( \mu_i = 0 \), so \( x_i = \lambda a_i \).
\end{itemize}
Hence, the optimal solution is 
%\[
$x^* = \lambda P_{\mathcal{I}}(a)$. 
%\]
To enforce the constraint \( \langle a, x \rangle = 1 \), we compute:
\[
\langle a, x^* \rangle = \lambda \langle a, P_{\mathcal{I}}(a) \rangle = 1 \quad \Rightarrow \quad \lambda = \frac{1}{\langle a, P_{\mathcal{I}}(a) \rangle}.
\]
Substituting back gives the optimal solution, 
%\[
$x^* = \frac{P_{\mathcal{I}}(a)}{\langle a, P_{\mathcal{I}}(a) \rangle}$, 
%\]
which is unique since the problem is strictly convex with affine constraints, the solution is unique.

\end{proof}

% \begin{remark}
% The result extends directly to the matrix-valued case considered in our setting, as the optimization decouples across entries due to the separable structure of the constraints. Each column (or row) can thus be treated independently using the above result.
% \end{remark}

As a consequence, we have that on an interval where $E_0(U(t))$ and $E_0(V(t))$ do not change, the differential system for the factors becomes %(we omit the argument $t$ for conciseness):
\begin{equation}
    \dot{U} = P_U^+ (\mathcal{G}_1 V) 
    \quad \text{ and } \quad 
    \dot{V} = P_V^+ (\mathcal{G}_1^\top U). \label{UdotVdot}
\end{equation}

\subsection{Low-rank integrator and computational cost} \label{sec:pseudocode}

In this section, we present the discrete version of the nonnegative low-rank integrator, highlighting its numerical formulation and computational cost. We extend the method by including a backtracking strategy for adaptive step-size control, which enhances stability and ensures consistency with the theoretical dynamics.
 The method solves the differential equation~\eqref{UdotVdot} for the  factors $U \in \mathbb{R}^{n \times r}$ and $V \in \mathbb{R}^{m \times r}$. 
 This differential equation is integrated numerically; in practice, we use an explicit Euler discretization.
Clearly, this is not the only possible choice, but in this case it is convenient, as we are interested in stationary points of the system.

\begin{algorithm}[ht!]
\caption{Nonnegative Low-Rank Integrator (RNeg)\label{alg1}} 
\begin{algorithmic}[1]
\Require 
\begin{itemize}[noitemsep, topsep=0pt]
  \item Matrix operator $\cA = \cA(U,V) \in  {R}_+^{m \times n}$ 
  %\( A_i \in \mathbb{R}_+^{m \times m},\ B_i \in \mathbb{R}_+^{n \times n} \) for \( i = 1, \dots, k \), and weights \( \alpha \in \mathbb{R}^k \) \ngc{I think we should be more general here, just mentioning an operation that computes $\mathcal A(U,V)$ }
  \item Initial low-rank factors \( U \in \mathbb{R}_+^{m \times r},\ V \in \mathbb{R}_+^{n \times r} \)
  \item Step size \( h > 0 \), tolerance \texttt{tol}, maximum iterations \texttt{nmax}
\end{itemize}

\Ensure 
\begin{itemize}[noitemsep, topsep=0pt]
  \item Factors \( (U, V) \) such that \( X = U V^\top \in \mathbb{R}_+^{m \times n} \) is an approximate nonnegative low-rank solution to~\eqref{Ode}
  \item Approximate eigenvalue \( \lambda \)
  \item Convergence measures \( d_U, d_V \)
\end{itemize}

\State $d_U = \infty$, $d_V = \infty$,\quad $step = 0$
\While{$\min(d_U, d_V) >$ \texttt{tol} \textbf{and} $step <$ \texttt{nmax}}
    \State $step = step + 1$
    \State $U_{\text{prev}} = U,\; V_{\text{prev}} = V$ 
    \hfill \Comment{\( \mathcal{O}(mr+nr) \)}
    
    \State Compute \( F = \mathcal{A}(U, V) \) 
    \hfill \Comment{\( \mathcal{O}\big(k(m^2r + n^2r + mnr)\big) \)}
    
    \State \( \mathrm{UtU} = U^\top U,\quad \mathrm{VtV} = V^\top V \) 
    \hfill \Comment{\( \mathcal{O}(mr^2 + nr^2) \)}
    
    \State \( \lambda = \mathrm{Re}(\mathrm{trace}(F^\top U V^\top)) \) 
    \hfill \Comment{\( \mathcal{O}(mnr) \)}
    
    \State \( G_U = F V - \lambda U \mathrm{VtV} \) 
    \hfill \Comment{\( \mathcal{O}(mnr + mr^2) \)}
    
    \State \( G_V = F^\top U - \lambda V \mathrm{UtU} \) 
    \hfill \Comment{\( \mathcal{O}(mnr + nr^2) \)}

    \State Compute admissible step size $h_{\max}$ to preserve nonnegativity
    \[
      h_{\max} = \min \Big\{ 
      \min_{(i,j): U_{ij}=0,\ G_{U,ij}<0} \tfrac{-U_{ij}}{G_{U,ij}}, \ 
      \min_{(i,j): V_{ij}=0,\ G_{V,ij}<0} \tfrac{-V_{ij}}{G_{V,ij}}
      \Big\}
    \]
    \State $h \gets \min(h, h_{\max})$
    \hfill \Comment{\( \mathcal{O}(mr + nr) \)}
    
    \State \( U = U + h \cdot \mathcal{P}^+_UG_U,\quad V = V + h \cdot \mathcal{P}^+_VG_V \) 
    \hfill \Comment{\( \mathcal{O}(mr + nr) \)}
    
    \State \( U = \max(U, 0),\quad V = \max(V, 0) \) 
    \hfill \Comment{Entrywise, \( \mathcal{O}(mr + nr) \)}
    
    \State \( s = \sqrt{\mathrm{trace}(\mathrm{UtU} \cdot \mathrm{VtV})} \) 
    \hfill \Comment{\( \mathcal{O}(r^2) \)}
    
    \State \( U = U / \sqrt{s},\quad V = V / \sqrt{s} \) 
    \hfill \Comment{\( \mathcal{O}(mr + nr) \)}
    
    \State \( d_U = \| U - U_{\text{prev}} \|_F \) 
    \hfill \Comment{\( \mathcal{O}(mr) \)}
    \State \( d_V = \| V - V_{\text{prev}} \|_F \) 
    \hfill \Comment{\( \mathcal{O}(nr) \)}
\EndWhile

\State \Return \( U, V, \lambda, d_U, d_V \)
\end{algorithmic}
\end{algorithm}

%\begin{algorithm}[H]\label{alg:projection}
%\caption{Projection onto the admissible cone $\mathcal{P}^+$ \ngc{I don't think we need an algorithm to define this projection. I can be defined in the text; see \eqref{projUplus}.}}
%\begin{algorithmic}[1]
%\Require 
%\begin{itemize}[noitemsep, topsep=0pt]
%  \item Current iterate \( W \in \mathbb{R}^{p \times r} \) (e.g.\ \(U\) or \(V\))
 % \item Candidate update \( Z \in \mathbb{R}^{p \times r} \) (e.g.\ gradient step)
%\end{itemize}
%\Ensure 
%\begin{itemize}[noitemsep, topsep=0pt]
%  \item Projected update \( \widehat{Z} = \mathcal{P}^+_{W}(Z) \)
%\end{itemize}

%\For{each entry $(i,j)$} \hfill \Comment{\( \mathcal{O}(pr) \)}
 %   \If{$W_{ij} > 0$} 
  %      \State $\widehat{Z}_{ij} = Z_{ij}$
  %  \Else
   %     \State $\widehat{Z}_{ij} = \max(0, Z_{ij})$
%    \EndIf
%\EndFor
%\State \Return $\widehat{Z}$
%\end{algorithmic}
%\end{algorithm}

For all applications considered in this work, the operator $\cA(U,V) $ can be evaluated directly in factorized form, leading to efficient implementations. 
For example, for Markov grids, the matrix field \( \mathcal{A}(U, V) \) is never applied to the full matrix \( X = UV^\top \), but is instead computed efficiently in factorized form: 
\[
F = \mathcal{A}(U, V) = \sum_{i=1}^k \alpha_i (A_i^\top U)(B_i V)^\top,
\]
is computed at each iteration without forming the full matrix \( X \in \mathbb{R}_+^{m \times n} \). 
% Each term involves:
% \begin{itemize}
%   \item Multiplying \( A_i^\top \in \mathbb{R}^{m \times m} \) with \( U \in \mathbb{R}^{m \times r} \), costing \( \mathcal{O}(m^2 r) \),
%   \item Multiplying \( B_i \in \mathbb{R}^{n \times n} \) with \( V \in \mathbb{R}^{n \times r} \), costing \( \mathcal{O}(n^2 r) \),
%   \item Multiplying the resulting \( m \times r \) and \( n \times r \) matrices to form \( (A_i^\top U)(B_i V)^\top \), costing \( \mathcal{O}(m n r) \).
% \end{itemize} 
% Therefore, 
The total cost for computing \( F \) is $\mathcal{O}\big( k (m^2 r + n^2 r + m n r) \big)$ operations.  
Additional computations at each iteration include:
\begin{itemize}
  \item Computing \( U^\top U \) and \( V^\top V \) at cost \( \mathcal{O}((m + n) r^2) \),
  \item Computing  \( G_U \) and \( G_V \) at cost \( \mathcal{O}(m n r + (m + n) r^2) \),
  \item Updating \( U \) and \( V \) and enforcing element-wise nonnegativity at cost \( \mathcal{O}((m + n) r) \),
  \item Computing the normalization scalar \( s \) at cost \( \mathcal{O}(r^2) \),
  \item Normalizing \( U \) and \( V \) at cost \( \mathcal{O}((m + n) r) \).
\end{itemize}

The overall dominant cost per iteration is thus
\[
\mathcal{O}\big( k (m^2 r + n^2 r + m n r) \big),
\]
which is substantially cheaper than full-rank methods, since the rank \( r \) is small compared to \( m \) and \( n \).
Similarly to the Markov grid case, the operator action in the PDE experiments is evaluated directly in low-rank form;  see Section \ref{PDE_experiments}.

A step-size control to maintain nonnegativity ensures consistency with the continuous-time theory: each discrete update is performed only as long as the zero pattern of the factor matrices does not change. Projections onto the zero components are applied as a safeguard, keeping the support consistent with the theoretical dynamics; 
see \eqref{UdotVdot}.

At each iteration, we employ a backtracking line search strategy.
Specifically, the step size is accepted only if the Frobenius norms of the {projected directions}, $P_U^+(G_U)$ and $P_V^+ (G_V)$, decrease, where the projection is defined entrywise according to the support of the current computed iterates. This projection ensures that only the active components—those corresponding to positive entries—contribute to descent, reflecting the constraint of the dynamics to the nonnegative orthant. By enforcing this condition, the algorithm maintains consistency with the continuous differential flow along the feasible directions. If the condition is not satisfied, the step is rejected and reduced by a factor $0<\beta_{rej}<1$; conversely, upon acceptance, the step size is allowed to grow moderately, by a multiplicative factor $\beta_{acc}>1$.

\section{Numerical Experiments} \label{sec:numexp}

In this section, we present numerical experiments  to assess the performance of the proposed method, Algorithm~\ref{alg1}. 
We consider two classes of test problems: Markov grid models (see Section~\ref{sec:grids}) and discretized PDEs (see Section~\ref{sec:metzler}).
For both settings, we compare our approach with several baseline methods in terms of computational cost, accuracy of the computed eigenpairs, and preservation of nonnegativity.
The experiments aim to highlight the effectiveness of the proposed algorithm across different problem structures and to illustrate its practical behavior in representative large-scale scenarios.
Numerical experiments were performed in MATLAB 2024a on a MacBook Air with Apple M3 (8 cores) and 16 GB RAM. 
The code is available from \url{https://gitlab.com/ngillis/nonnegative-low-rank-leading-eigenmatrix/}.

\paragraph{Compared algorithms}

We compute a reference solution for each experiment.
In particular, for the Markov grid tests we compute the dominant eigenpair $(\lambda^*, X^*)$
using the Power method applied to the full problem, with tolerance $10^{-8}$. 
The reason we choose the power method for the Makov grids is that \texttt{EIGS} was not able to compute accurate solutions in this setting, because it has to construct explicitly the large transition matrix $P \in \mathbb{R}^{m n \times m n}$ which appears to lead to numerical instability; see~\eqref{eq:mattovec} and the discussion around it. 
%As an additional reference, we also employ the \texttt{EIGS} function of MATLAB applied to the full (equivalent) Kronecker matrix.
For the experiments involving operators arising from the discretization of specific PDEs,
the reference solution is not computed via the Power method. 
In this case, we rely exclusively on the \texttt{EIGS} function, since we seek the eigenvalue
with largest real part, which does not necessarily coincide with the spectral radius. 
Low-rank approximations of the eigenmatrix $X^*$ are then obtained via truncated SVD and NMF, which we denote by
Power+SVD and Power+NMF for the Markov grid and EIGS+SVD and EIGS+NMF for the PDEs, respectively. To compute the NMF, we use \url{https://gitlab.com/ngillis/nmfbook/}. 
We further compare against the low-rank integrator PSI proposed in~\cite{guglielmi2021computing}. 
Finally, we consider our proposed approach, Algorithm~\ref{alg1}, dubbed RNeg, which computes the dominant
eigenmatrix by evolving a continuous-time dynamical system constrained to have fixed (low) rank
and nonnegativity.

\paragraph{Performance metrics} We report the computational time, in seconds, required by each method. 
For a linear operator $\mathcal A$, we compute the groundtruth (possibly high rank) solution with either the power method for Markov grids or with EIGS for the PDEs, and denote it $(X^*,\lambda^*)$.  
Given a solution $(X,\lambda)$ computed by one of the considered  methods, we will report the relative error %RelErr \ngc{Can you define it?} and the best scaling errors: 
\[
\text{RelErr} = \min_\alpha \frac{\|\alpha X-X^*\|_F}{\|X^*\|_F}. 
\] 
The reason we scale $X$ is that some methods normalize $X$ in the $\ell_2$ norm (e.g., \texttt{EIGS}), while $X^*$ might have a different norm (e.g., for Markov grids with the power method, it has $\ell_1$ norm equal to one). 
We will also report the values of $\|\mathcal{A}(X) - \lambda X\|_F$, $|\lambda^* - \lambda|$, 
along with the number of negative entries in $X$. 
For all methods, the approximated matrices $X$ are normalized in Frobenius norm prior to computing the approximate eigenvalue $\lambda$, the residual $\|\mathcal A(X) - \lambda X\|_F$, and the number of negative entries, ensuring that the comparisons across different algorithms are consistent and independent of scaling.

\subsection{Markov grids} 

In this experiment, we consider a class of structured test problems based on Markov grids.

\paragraph{Synthetic Markov grids} We construct block-structured operators from pairs of row-stochastic matrices 
\(\{(A_i,B_i)\}_{i=1}^{t+1}\) combined with positive weights \(\{\alpha_i\}_{i=1}^{t+1}\) such that 
\(\sum_{i=1}^{t+1} \alpha_i = 1\). Recall that the action of the operator on a matrix \(X\) is defined as 
\begin{equation}
    \mathcal{A}(X) = \sum_{i=1}^{t+1} \alpha_i \, A_i^\top X B_i .
\end{equation}  
The first $t$ terms are block sub-grids of size $N_i \times N_i$, and the Markov grid has size $m = n = \sum_{i=1}^t N_t$. 
Each matrix $A_i$ and $B_i$ has nonzero entries 
only within the rows and columns corresponding to block~$i$. 
Each row of the nonzero block matrices are generated using the Dirichlet distribution with uniform parameter equal to one\footnote{It generates vectors uniformly at random on the unit simplex, $\{x \ | \ x \geq 0, \sum_i x_i = 1\}$.}, making the $A_i$'s and $B_i$'s row stochastic, so that the operator preserves the probabilistic interpretation of the process; see Section~\ref{sec:grids}.  
The last term ($i=t+1$) is a dense, fully-connected component, also generated using the Dirichlet distribution of uniform parameter one, with the corresponding parameter $\alpha_{t+1} = \delta$, while $\alpha_{1:t}$ is generated using the Dirichlet distribution with uniform parameter one, and multiplied by $(1-\delta)$.

This setting is particularly interesting; 
while the full-rank stationary matrix is nonnegative, its low-rank approximations  typically contain negative entries, similarly as in Example~\ref{ex:neglowrank33}. %which are inconsistent with the probabilistic structure of the problem. 
%This motivates the use of alternative approaches such as (NMF or our proposed low-rank integrators, which are able to provide approximations that better respect the underlying structure.
Although these matrices are not low-rank, 
we consider them as examples for our controlled benchmark. 
It allows us to test the ability of low-rank integrators and other methods to preserve 
important structural properties such as non-negativity and block-wise interactions, 
even when the underlying stationary matrix is full rank.

\paragraph{Experiment 1: block Markov grids}  

The first experiment considers a block Markov grid of size $n = 50$ with $t=n$ blocks 
of size one ($N = [1,\dots,1]$) and a fully connected component weighted by $\delta = 0.2$.  
%In our numerical experiment, we generate an operator of size $n=50$. The full-rank solution $X^*$ is non-negative by construction and it corresponds to the eigenvalue $1$.
For all low-rank methods, we set the rank to $r=10$. 
Table~\ref{tab:merged_markov50} reports the  average results over 10 runs of the tested algorithms. 
%\ngc{Explain what we } 
%\ngc{I think we can merge Tables 1 and 2, removing the parameters $n$,$r$,$\delta$, and merging columns using $\pm$ for the standard deviation, e.g., $1.5 \pm 0.05$. I can do it if you agree.} 
%Specifically, we tested: the \textit{Power Method}, \textit{SVD}, \textit{Frobenius NMF} (full-rank and low-rank after SVD reduction), the \textit{Projector Splitting Integrator (PSI)}, as well as our method (\textit{NNeg}) and the direct \textit{EIGS} approach.  
%\ngc{Isn't our approach called RNeg? Why this name by the way?} 
%Metrics considered include \textit{computation time}, \textit{relative and best-scaling errors}, \textit{eigenpair residuals}, and the \textit{number of negative entries}.  
We observe that, due to the sparsity of the grid, the Power Method—as well as Power+SVD and Power+NMF—is extremely efficient in terms of computation time for obtaining the low-rank solution, making iterative integration methods such as RNeg and PSI less competitive. 
In terms of accuracy, RNeg is comparable to Power+NMF. 
By construction, RNeg and  Power+NM preserve nonnegativity, whereas Power+SVD and PSI produce a significant number of negative entries. 
%Based on the relative error (RelErr), the best performance is achieved by Power+SVD and Power+NMF. 
When considering the  relative error (RelErr) and the residual norm $\|AX - \lambda X\|_F$, 
all methods show essentially similar performance. Finally, RNeg yields the most accurate eigenvalue.
In summary, RNeg provides a good trade-off between accuracy and nonnegativity constraints, even if it is not competitive in computational time compared to the Power Method on sparse grids. A topic of further research would be to make RNeg faster.

\begin{table}[h!]
\centering
\footnotesize
\setlength{\tabcolsep}{4pt}
\begin{tabular}{l c c c c c}
\toprule
Method
& Time $(\times 10^{-2})$
& RelErr $(\times 10^{-1})$
& $\lVert AX-\lambda X\rVert_F$
& $|\lambda-\lambda_{PM}| \; (\times 10^{-3})$
& \#neg \\
\midrule
Power
& $2.35 \pm 1.23$
& $< 10^{-14}$
& $< 10^{-14}$
& $< 10^{-14}$
& $0$ \\ 
\midrule 

Power+SVD
& $2.46 \pm 1.21$
& $8.89 \pm 0.00003$
& $1.925 \pm 0.003$
& $9.61 \pm 1.54$
& $838 \pm 23$ \\

Power+NMF
& $2.84 \pm 1.40$
& $8.89 \pm 0.00004$
& $1.957 \pm 0.006$
& $6.18 \pm 2.86$
& $0$ \\

PSI
& $(2.96 \pm 0.03)\,10^{4}$
& $8.89 \pm 0.00003$
& $1.929 \pm 0.003$
& $6.70 \pm 1.33$
& $1089 \pm 398$ \\

RNeg
& $(6.64 \pm 0.10)\,10^{4}$
& $8.89 \pm 0.00003$
& $1.940 \pm 0.003$
& $2.30 \pm 1.33$
& $0$ \\

\bottomrule
\end{tabular}
\caption{Computational results for $n=50$, $r=10$, $\delta=0.2$ over $10$ trials. Each entry reports mean $\pm$ standard deviation.}
\label{tab:merged_markov50}
\end{table}

\paragraph{Experiment 2: random Markov grids}

In addition to the structured block grids, we also consider fully random  Markov grids. 
Each matrix $A_i$ and $B_i$ is generated using 
\texttt{sprand} to create random entries, with density $\sigma = 0.9$. The matrices are then 
normalized so that the entries on their rows sum to one. 
The matrices are combined using a set of random convex weights $\alpha$, sampled from a Dirichlet distribution to ensure $\sum_i \alpha_i = 1$.

This setting represents a more unstructured, large-scale problem where the operator lacks the almost block-diagonal pattern of the previous example. The results for $n = 100$ and $r=3$ are shown in Table \ref{merged_markov100}.
In this case, the low-rank approximations of $X^*$ are nonnegative.

\begin{table}[h!]
\footnotesize
\setlength{\tabcolsep}{4pt}
\centering
\begin{tabular}{l c c c c c}
\toprule
Method
& Time $(\times 10^{-2})$
& RelErr
& $\lVert AX-\lambda X\rVert_F$
& $|\lambda-\lambda_{PM}|$
& \#neg \\
\midrule
Power
& $0.756 \pm 0.608$
& $< 10^{-14}$
& $< 10^{-13}$
& $< 10^{-14}$
& $0$ \\
\midrule
Power+SVD
& $1.10 \pm 0.78$
& $< 10^{-14}$
& $< 10^{-14}$
& $< 10^{-13}$
& $0$ \\

Power+NMF
& $1.91 \pm 0.14$
& $(6.79 \pm 16.2)\,10^{-8}$
& $(6.81 \pm 16.2)\,10^{-8}$
& $(1.63 \pm 4.85)\,10^{-10}$
& $0$ \\

PSI
& $(3.99 \pm 0.12)\,10^{3}$
& $(1.38 \pm 3.70)\,10^{-2}$
& $(1.35 \pm 3.62)\,10^{-2}$
& $(1.32 \pm 4.13)\,10^{-3}$
& $0$ \\

RNeg
& $(1.61 \pm 33.4)\,10^{3}$
& $(8.16 \pm 18.9)\,10^{-6}$
& $(8.78 \pm 20.4)\,10^{-6}$
& $(1.26 \pm 2.94)\,10^{-7}$
& $0$ \\

% EIGS
% & $0.878 \pm 0.144$
% & $(9.21 \pm 1.11)\,10^{-2}$
% & $(9.22 \pm 1.11)\,10^{-2}$
% & $< 10^{-14}$
% & $0$ \\
\bottomrule
\end{tabular}
\caption{Computational results for $n=100$, $r=3$, $\delta=0.10$ over $10$ trials. Each entry reports mean $\pm$ standard deviation.}
\label{merged_markov100}
\end{table}
In this case, RelErr coincide for all methods, which all achieve excellent performance, 
except for PSI, which reaches an accuracy of only the order of $10^{-2}$. Similar results are observed for 
$\|AX - \lambda X\|$ and for the error in the eigenvalue.
This second experiment shows that, when the solution of the problem truly has a low-rank (or nearly low-rank) structure, the performance of our method is comparable to NMF applied to an accurate reference solution (that is, Power+NMF).

%The experiment presented in the previous section, while challenging because of the lack of a genuine low-rank structure in the solution, allows us to assess its robustness, since it performs similarly  than NMF applied to an accurate reference solution (that is, Power+NMF). 

%Clearly, we are referring to the comparison with , as it is the only method—besides RNeg—able to preserve positivity in a low rank setting.

\subsection{Positive dominant eigenmodes in diffusion models}
\label{PDE_experiments}

We consider a class of spatially heterogeneous growth–diffusion models~\cite{cantrell2003spatial} as discussed in Section~\ref{sec:metzler}. 
Specifically, the continuous PDE reads
\[
    \frac{\partial u}{\partial t} = \varepsilon \, \Delta u + r(x,y) \, u, 
    \qquad (x,y) \in \Omega \subset \mathbb{R}^2,
\]
where $\varepsilon>0$ is the diffusion coefficient and $r(x,y)$ is a spatially varying growth rate. 
The diffusion term accounts for movement or mixing, while $r(x,y)$ represents local demographic variations. 
Depending on the boundary conditions, the model can describe closed habitats (Neumann), partially open systems (Robin), or lethal boundaries (Dirichlet).  
Depending on the boundary conditions, the model can describe closed habitats (Neumann), partially open systems (Robin), or lethal boundaries (Dirichlet).  
The asymptotic behavior of $u$ is governed by the {principal eigenpair} $(\lambda_{\max}, u_{\max})$ of the associated elliptic operator. 
The principal eigenvalue $\lambda_{\max}$ (spectral abscissa) determines persistence or extinction, while the corresponding eigenfunction $u_{\max}$ describes the stationary spatial distribution and is  positive within $\Omega$.

More general diffusion models can introduce heterogeneity through spatially varying diffusion coefficients $\varepsilon(x,y)$. 
To numerically study the principal eigenmodes, we discretize the spatial domain $\Omega$ on an $n \times n$ uniform grid. 
Let $Z \in \mathbb{R}^{n \times n}$ denote the matrix of grid values, with entries 
\[
Z_{ij} \approx u(x_i,y_j), \quad i,j = 1,\dots,n.
\] 
Applying finite-difference approximations to the Laplacian and evaluating the growth term on the grid, the continuous PDE is transformed into the semi-discrete system
\begin{equation}
\dot{Z} = \varepsilon \, (A Z + Z A^\top) + \varepsilon_r \, R \circ Z,
\end{equation}
where $A \in \mathbb{R}^{n \times n}$ is the standard finite-difference discretization of the one-dimensional Laplacian with Neumann boundary conditions, 
$\circ$ denotes the Hadamard (elementwise) product, and
\begin{equation}
R = r_0 + \phi \, \psi^\top,
\end{equation}
with separable modulation vectors
\begin{equation}
\phi = \sin(2\pi x), \qquad \psi = \cos(2\pi y),
\end{equation}
where $x = (x_1,\dots,x_n)^\top$ and $y = (y_1,\dots,y_n)^\top$ are the grid coordinates in the $x$ and $y$ directions. 
The matrix $R \in \mathbb{R}^{n \times n}$ encodes the spatial heterogeneity of the growth term over the grid. 
The principal eigenpair of the semi-discrete operator determines the asymptotic behavior of $Z$, analogous to the continuous PDE.

The discretized operator is Metzler, so the Perron--Frobenius theorem ensures the positivity of the dominant eigenmatrix. 
This property is crucial in biological and physical applications, where negative densities are nonphysical. 
Furthermore, in terms of the numerical approximation of the solution of the PDE, the stiffness introduced by fine spatial discretization makes implicit or semi-implicit time integration schemes particularly suitable. 
The knowledge of the operator's spectral abscissa is important to guarantee the conservation of positivity in the numerical solution~\cite{hundsdorfer2003numerical}.

We then compute the dominant eigenpair by applying the proposed low-rank positive algorithm to the discretized operator: 
\[
f(U,V) =
\varepsilon \big( (AU)V^{\top} + U(AV)^{\top} \big)
+
R \circ (UV^{\top}).
\]  
Recall that the reference solution, denoted $X^*$, is obtained using the MATLAB routine \texttt{EIGS}, which computes the dominant eigenpair of the sparse Metzler operator associated with the discretized problem. %The resulting dominant eigenmatrix is normalized to have unit Frobenius norm. 
%We then apply two low-rank integrators — the projector-splitting integrator (PSI) and the proposed method (RNeg) — starting from the same random nonnegative initialization. For comparison, we also consider either a truncated SVD (EIGS+SVD) or a NMF (EIGS+NMF), which provide low-rank approximations of $X_{\mathrm{dom}}$. \ngc{Already explained}
%For each experiment, we provide two tables summarizing the results.
%We report, for every method, the Frobenius error $\|X - X_{\mathrm{dom}}\|_F$, the computational time, the number of negative entries, the computed eigenvalue, its error with respect to the reference solution, and the deviation from being an exact eigenpair, measured as
%\[
%\| \mathcal{A}(X) - \lambda X \|_F .
%\]
%All matrices are normalized to unit Frobenius norm to enable a fair comparison.
A method that preserves the nonnegativity structure of the dominant eigenmatrix 
should exhibit a strictly positive minimum value and no negative entries, in 
agreement with the Perron--Frobenius property of Metzler matrices. 
%The second table lists, for each method, 
For the numerical tests, we consider the values $r_0 = 0.1, \varepsilon = 0.01$ and $\varepsilon_r = 3\pi$. The initial stepsize is $h = 5 \times 10^{-3}$ for both PSI and RNeg.
We emphasize that, for this type of operators,  the first two eigenvalues with largest real parts are very close to each other, while we assumed that the spectral abscissa must be a simple eigenvalue. 
Therefore, this is a particularly challenging case for the methods PSI and  RNeg, but from the results we see that the eigenvalue approximation is always quite good. For RNeg we have good performances in terms of computational time in this case.
Tables~\ref{tab:100r3} and~\ref{tab:100r4} provide the results for dimension $n = 200$ and ranks $3$ and $4$, respectively and Tables~\ref{tab:200r3} and~\ref{tab:200r4} for $n = 200$ and ranks $3$ and $4$, respectively.

\begin{table}[h!]
\centering
\footnotesize
\setlength{\tabcolsep}{4pt}
\begin{tabular}{l c c c c c}
\toprule
Method & Time  & RelErr  & $\lVert AX-\lambda X\rVert_F$ & $|\lambda-\lambda_{max}|$ & \#neg \\
\midrule
EIGS & $0.33$ & $< 10^{-14}$ & $< 10^{-14}$ & $1.3 \times 10^{-12}$ & $0$ \\ \midrule
EIGS+SVD & $0.33$ & $7.5 \times 10^{-3}$ & $1.1 \times 10^{-1}$ & $7.4 \times 10^{-4}$ & $1222$ \\
EIGS+NMF & $0.34$ & $8.8 \times 10^{-3}$ & $7.8 \times 10^{-2}$ & $6.0 \times 10^{-4}$ & $0$ \\
PSI & $5.1$ & $4.4 \times 10^{-2}$ & $7.0 \times 10^{-2}$ & $5.6 \times 10^{-4}$ & $722$ \\
RNeg & $0.23$ & $7.0 \times 10^{-1}$ & $1.0 \times 10^{-1}$ & $1.2 \times 10^{-3}$ & $0$ \\
\bottomrule
\end{tabular}
\caption{Computational results for $n=100$ and $r=3$.}
\label{tab:100r3}
\end{table}

\begin{table}[h!]
\centering
\footnotesize
\setlength{\tabcolsep}{4pt}
\begin{tabular}{l c c c c c}
\toprule
Method & Time  & RelErr  & $\lVert AX-\lambda X\rVert_F$ & $|\lambda-\lambda_{max}|$ & \#neg \\
\midrule
EIGS & $0.46$ & $< 10^{-14}$ &  $1.3 \times 10^{-12}$ & $< 10^{-14}$ & $0$ \\ \midrule
EIGS+SVD & $0.46$ & $4.8 \times 10^{-4}$ & $8.2 \times 10^{-3}$ & $3.8 \times 10^{-6}$ & $378$ \\
EIGS+NMF & $0.49$ & $4.8 \times 10^{-4}$ & $8.3 \times 10^{-3}$ & $3.9 \times 10^{-6}$ & $0$ \\
PSI & $5.6$ & $1.0 \times 10^{-1}$ & $8.2 \times 10^{-3}$ & $4.3 \times 10^{-6}$ & $252$ \\
RNeg & $0.51$ & $7.1 \times 10^{-1}$ & $1.0 \times 10^{-1}$ & $1.2 \times 10^{-3}$ & $0$ \\
\bottomrule
\end{tabular}
\caption{Computational results for $n=100$ and $r=4$.}
\label{tab:100r4}
\end{table}

\begin{table}[h!]
\centering
\footnotesize
\setlength{\tabcolsep}{4pt}
\begin{tabular}{l c c c c c}
\toprule
Method & Time  & RelErr  & $\lVert AX-\lambda X\rVert_F$ & $|\lambda-\lambda_{max}|$ & \#neg \\
\midrule
EIGS & $5.4$ & $< 10^{-14}$ & $4.7 \times 10^{-12}$  & $< 10^{-14}$ & $0$ \\ \midrule
EIGS+SVD & $5.5$ & $7.5 \times 10^{-3}$ & $1.1 \times 10^{-1}$ & $7.5 \times 10^{-4}$ & $4974$ \\
EIGS+NMF & $5.5$ & $8.5 \times 10^{-3}$ & $8.6 \times 10^{-2}$ & $6.5 \times 10^{-4}$ & $0$ \\
PSI & $38$ & $1.0 \times 10^{-1}$ & $6.5 \times 10^{-2}$ & $4.7 \times 10^{-4}$ & $2703$ \\
RNeg & $5.3$ & $7.2 \times 10^{-1}$ & $1.0 \times 10^{-1}$ & $1.2 \times 10^{-3}$ & $0$ \\
\bottomrule
\end{tabular}
\caption{Computational results for $n=200$ and $r=3$.}
\label{tab:200r3}
\end{table}

\begin{table}[h!]
\centering
\footnotesize
\setlength{\tabcolsep}{4pt}
\begin{tabular}{l c c c c c}
\toprule
Method & Time  & RelErr  & $\lVert AX-\lambda X\rVert_F$ & $|\lambda-\lambda_{max}|$ & \#neg \\
\midrule
EIGS+SVD & $5.6$ & $4.8 \times 10^{-4}$ & $8.2 \times 10^{-3}$ & $3.8 \times 10^{-6}$ & $1464$ \\
EIGS+NMF & $5.6$ & $5.9 \times 10^{-4}$ & $1.4 \times 10^{-2}$ & $5.7 \times 10^{-6}$ & $0$ \\ 
EIGS & $5.6$ & $< 10^{-14}$ & $4.7 \times 10^{-12}$ & $< 10^{-14}$ & $0$ \\ \midrule
PSI & $3.3$ & $3.0 \times 10^{-2}$ & $7.8 \times 10^{-3}$ & $3.7 \times 10^{-6}$ & $984$ \\
RNeg & $3.7$ & $7.1 \times 10^{-1}$ & $1.0 \times 10^{-1}$ & $1.2 \times 10^{-3}$ & $0$ \\
\bottomrule
\end{tabular}
\caption{Computational results for $n=200$ and $r=4$.}
\label{tab:200r4}
\end{table}

We observe that the solutions of both PSI and EIGS+SVD have negative values. EIGS+NMF is in general more accurate compared to  RNeg.

%\begin{table}[h]\centering\small
%\begin{tabular}{lccccccc}\hline
%Method  & Time [s] & Min$(X)$ & \#neg & $\lambda$ & $|\lambda-\lambda_{max}|$ & Residual \\\hline
%EIGS  & 6.200 & -1.559e-02 & 47944 & 7.744e+00 & 0.000e+00 & 1.218e-05 \\
%PSI  & 62.267 & -5.071e-06 & 11737 & 7.744e+00 & 5.373e-04 & 7.076e-02 \\
%RNeg  & 16.895 & 5.519e-17 & 0 & 7.743e+00 & 1.189e-03 & 1.021e-01 \\
%%eig+SVD & 5.440e-04 & 6.237 & -1.558e-02 & 52288 & 7.744e+00 & 4.159e-06 & 7.962e-03 \\
%%eig+NMF & 1.387e+00 & 6.250 & 2.479e-08 & 0 & 7.707e+00 & 3.678e-02 & 2.074e+00 \\
%\hline\end{tabular}\label{tab:400r3}\caption{Computational results rank $3$ and $n = 400$.}
%\end{table}

%\begin{table}[h]\centering\small
%\begin{tabular}{lccccccc}\hline
%Method & $\|X-X^*\|_F$ & Time [s] & Min$(X)$ & \#neg & $\lambda$ & $|\lambda-\lambda_{max}|$ & Residual \\\hline
%EIGS & 0.000e+00 & 6.040 & -1.559e-02 & 47944 & 7.744e+00 & 0.000e+00 & 1.218e-05 \\
%PSI & 1.404e+00 & 32.931 & -6.991e-07 & 3880 & 7.744e+00 & 1.176e-05 & 7.743e-03 \\
%RNeg & 1.352e+00 & 18.031 & 5.580e-17 & 0 & 7.743e+00 & 1.189e-03 & 1.021e-01 \\
%%eig+SVD & 2.217e-04 & 6.101 & -1.559e-02 & 53378 & 7.744e+00 & 4.619e-07 & 2.199e-03 \\
%%eig+NMF & 1.392e+00 & 6.078 & 1.463e-07 & 0 & 5.892e+00 & 1.852e+00 & 8.536e+00 \\
%\hline\end{tabular}\caption{Computational results rank $4$ and $n = 400$.}
%\end{table}

\subsection{Experiment with a fully separable structure}

We introduce a separable spatial modulation of the discrete growth term, 
chosen specifically to favor low-rank structure in the solution, 
which allows efficient computation using low-rank factorization techniques. 
The semi-discrete system reads
\begin{equation}
\dot{Z} = \varepsilon \big( A Z + Z A^\top \big) 
+ r_0 \, Z + \varepsilon_r \, \phi \, Z \, \psi,
\end{equation}
where $A \in \mathbb{R}^{n \times n}$ is the finite-difference approximation 
of the one-dimensional Laplacian with Neumann boundary conditions, 
$Z \in \mathbb{R}^{n \times n}$ is the matrix of grid values $Z_{ij} = u(x_i,y_j)$, 
and 
\begin{equation}
\phi = \phi(x) \in \mathbb{R}^{n \times 1}, \qquad 
\psi = \psi(y) \in \mathbb{R}^{n \times 1}
\end{equation}
are column vectors defining the separable spatial modulation of the growth term in $x$ and $y$, respectively.  
In terms of low-rank factors $Z \approx U V^\top$, the operator is evaluated as
\begin{equation}
f(U,V) = \varepsilon \big( A U V^\top + U (A V)^\top \big) 
+ r_0 \, U V^\top + \varepsilon_r \, (\phi U) (V^\top \psi),
\end{equation}
where the separable growth term applies the modulation via standard matrix multiplication.
We consider  
\[
\phi(x) = 0.3 \, ( \sin(3\pi x)), \quad 
\psi(y) = 0.2 \, (\cos(\pi y)).
\]
and the following choices for the parameters
$   r_0 = 0.3,  \varepsilon = 0.1,$ and $ \varepsilon_r = 0.01$ and consider dimensions $n = 100, 150$ and ranks $r=2,3$, respectively.  The  choosen for the ODE-based methods is $h = 10^{-4}$.
The results are shown in Tables \ref{tab:m100r3} and \ref{tab:m150r2}.

\begin{table}[h!]
\centering
\footnotesize
\setlength{\tabcolsep}{4pt}
\begin{tabular}{l c c c c c}
\toprule
Method & Time & RelErr & $\lVert AX-\lambda X\rVert_F$ & $|\lambda-\lambda_{max}|$ & \#neg \\
\midrule
EIGS & $25.7$ & $< 10^{-14}$ & $2.65 \times 10^{-9}$ & $< 10^{-14}$ & $0$ \\ \midrule
EIGS+SVD & $25.8$ & $1.0 \times 10^{-11}$ & $1.1 \times 10^{-9}$ & $1.8 \times 10^{-13}$ & $0$ \\
EIGS+NMF & $25.8$ & $4.7 \times 10^{-5}$ & $1.4 \times 10^{-2}$ & $2.2 \times 10^{-7}$ & $0$ \\
PSI & $46.8$ & $8.1 \times 10^{-11}$ & $1.5 \times 10^{-10}$ & $3.0 \times 10^{-10}$ & $0$ \\
RNeg & $8.6$ & $1.7 \times 10^{-4}$ & $3.3 \times 10^{-4}$ & $3.8 \times 10^{-8}$ & $0$ \\
\bottomrule
\end{tabular}
\caption{Computational results for $n=100$ and $r=3$.}
\label{tab:m100r3}
\end{table}
\begin{table}[h!]
\centering
\footnotesize
\setlength{\tabcolsep}{4pt}
\begin{tabular}{l c c c c c}
\toprule
Method & Time & RelErr & $\lVert AX-\lambda X\rVert_F$ & $|\lambda-\lambda_{max}|$ & \#neg \\
\midrule
EIGS & $3967$ & $< 10^{-14}$ & $2.4 \times 10^{-9}$ & $< 10^{-14}$ & $0$ \\ \midrule
EIGS+SVD & $3967$ & $1.7 \times 10^{-10}$ & $6.9 \times 10^{-9}$ & $7.4 \times 10^{-13}$ & $0$ \\
EIGS+NMF & $3967$ & $4.7 \times 10^{-5}$ & $2.5 \times 10^{-2}$ & $1.1 \times 10^{-7}$ & $0$ \\
PSI & $162$ & $1.8 \times 10^{-10}$ & $6.9 \times 10^{-9}$ & $3.1 \times 10^{-10}$ & $0$ \\
RNeg & $29.6$ & $4.9 \times 10^{-5}$ & $2.9 \times 10^{-4}$ & $1.2 \times 10^{-8}$ & $0$ \\
\bottomrule
\end{tabular}
\caption{Computational results for $n = 150$ and $r=2$.}
\label{tab:m150r2}
\end{table}
In this case, the low-rank methods generally exhibit strong performance in terms of both accuracy and computational efficiency. PSI preserves nonnegativity, but this is not guaranteed as illustrated in the previous sections. 
RNeg is significantly faster than PSI, taking advantage of the low-rank structure directly and the stepsize adaptation strategy is advantageous when compared to PSI, which uses a fixed step. EIGS is very slow it cannot take advantage of the low-rank structure, and hence EIGS+SVD and EIGS+NMF are also significantly slower. 
Moreover, in this experiment, RNeg provides a better estimate to the eigenvalue than EIGS+NMF.

\section{Conclusion} 

In this paper, we proposed a nonnegative low-rank integrator, RNeg, see Algorithm~\ref{alg1}, to compute the leading eigenmatrix of a linear operation $\mathcal A$. We illustrated its use on two applications: Markov grids, a newly introduced class of Markov chains, and solving Meltzer operators arising from PDEs.  
%Further work include the use of RNeg in other applications, and the design of other algorithms to tackle this problem. 
The investigation of methods based on differential systems capable of handling both low-rank and nonnegativity constraints appears promising. Their design and use for other  problems is a topic for future research.

%{References for theoretical background:} %\cite{hundsdorfer2003numerical,farina2000positive,cantrell2003spatial,hanski1997metapopulation}.

\bibliographystyle{spmpsci}
\bibliography{biblio}

\end{document}